\documentclass[11pt]{article}
\usepackage[english]{babel}
\usepackage{amssymb,amsmath,amsthm}
\usepackage{graphicx}
\textwidth=172truemm \textheight=251truemm \voffset-2.5truecm
\hoffset-2truecm \hfuzz17pt
\parindent=12pt

\newtheorem{theorem}{Theorem}[section]
\newtheorem{lemma}[theorem]{Lemma}
\newtheorem{proposition}[theorem]{Proposition}
\newtheorem{corollary}[theorem]{Corollary}

\newtheorem{example}[theorem]{Example}
\newtheorem{question}[theorem]{Question}
\newtheorem{conjecture}[theorem]{Conjecture}

{\theoremstyle{definition}}
{\theoremstyle{definition}}
{\theoremstyle{definition}}

\numberwithin{equation}{section}

\def\Z{{\mathbb Z}}
\def\R{{\mathbb R}}

\def\K{{\mathbb K}}

\def\epsilon{\varepsilon}
\def\kappa{\varkappa}
\def\phi{\varphi}
\def\leq{\leqslant}
\def\geq{\geqslant}

\def\dim{{\rm dim}\,}

\def\ker{\hbox{\tt ker}\,}
\def\spann{\hbox{\rm span}\,}

\def\huhu{{\vrule height10pt depth6pt width0pt}}
\def\Huhu{{\vrule height14pt depth6pt width0pt}}
\def\huHu{{\vrule height0pt depth10pt width0pt}}

\title{Two problems from the Polishchuk and Positselski book on Quadratic algebras}

\author{Natalia Iyudu and Stanislav Shkarin}


\begin{document}

\maketitle

\begin{abstract}
In the book 'Quadratic algebras' by Polishchuk and Positselski \cite{popo} algebras  with a small number of generators $(n=2,3)$ are considered. For some number $r$ of relations possible Hilbert series  are listed, and those appearing as series of Koszul algebras are specified. The first case, where it was not possible to do, namely the
case of three generators  $n=3$ and six relations $r=6$ is formulated as an open problem. We give here a complete answer to this question, namely for quadratic algebras with $\dim A_1=\dim A_2=3$, we list all possible Hilbert series, and find out which of them can come from Koszul algebras, and which can not.

As a consequence of this classification, we found an algebra, which serves as a counterexample to another problem from the same book \cite{popo} (Chapter 7, Sec. 1, Conjecture 2), saying that Koszul algebra  of finite global homological dimension $d$ has $\dim A_1 \geq d$.
Namely, the 3-generated algebra $A$ given by relations $xx+yx=xz=zy=0$ is Koszul and its Koszul dual algebra $A^!$ has Hilbert series of degree 4: $H_{A^!}(t)=  1+3t+3t^2+2t^3+t^4$, hence $A$ has global homological dimension 4.

\end{abstract}

\small \noindent{\bf MSC:} \ \  16A22, 16S37, 14A22

\noindent{\bf Keywords:} \ \ Quadratic algebras, Koszul algebras, Hilbert series, Gr\"obner  basis \normalsize

\section{Introduction \label{s1}}\rm

Quadratic algebras have been studied intensely during the past several decades. Being interesting in their own right, they have many important  applications in various parts of mathematics and physics including algebraic geometry, algebraic topology and group theory as well as in mathematical physics.

They frequently originate in physics. One example is that the 3-dimensional Sklyanin algebras were introduced and used in order to integrate a wide class of quantum systems on a lattice. These algebras have their Koszul duals  in the class of quadratic algebras with $\dim A_1=\dim A_2=3$, the very class we study in this paper.

Quadratic algebras are noncommutative objects which lie in foundation of many noncommutative theories, for example, in work of A.Connes and M.Dubois-Violette \cite{CDV}, notions of noncommutative differential geometry obtain their purely algebraic counterpart through introducing the appropriate quadratic form on quadratic algebras.
The big area of research generalizing notions of algebraic geometry to noncommutative spaces, due to Artin, Tate, Van den Bergh, Stafford etc. \cite{AS, ATV1,ATV2,sue} contains a great deal of studying structural and homological properties of quadratic algebras and  their representations.
 Certain quadratic algebras serve as important examples for the notions of noncommutative (symplectic) spaces introduced by Kontsevich \cite{MK,MR}, so information about general rules on the structure of such algebras makes it possible to describe examples explicitly.

 We find it very important to study fundamental, most general properties of quadratic algebras, their Hilbert series, Koszulity, other homological properties, PBW type properties, etc. and to develop appropriate tools for that, which is a goal of present paper.

For further information on quadratic algebras, their Hilbert series and various aspects of their applications, we refer to \cite{ani1,ani2,cana,na,ns,ns5,ns1,popo,
dr,M,Pri,gosh,skl,odf,DV1,DV2,BW,Ag1,Zelm,Er} and references therein, however it will give still the list which is far from being exhaustive.

Throughout this paper, $\K$ is an arbitrary field. For a $\Z_+$-graded vector space $B$, $B_m$ always stands for the $m^{\rm th}$ component of $B$ and $\textstyle H_B(t)=\sum\limits_{j=0}^\infty \dim B_j\,\,t^j$ is  the {\it Hilbert series} of $B$.

If $V$ is an $n$-dimensional vector space over $\K$, then $F=F(V)$ is the tensor algebra of $V$, which is naturally identified with the free algebra $\K\langle x_1,\dots,x_n\rangle$ for any choice of a basis $x_1,\dots,x_n$ in $V$.
We often use the juxtaposition notation for the operation in $F(V)$ (for instance, we write $x_jx_k$ instead of $x_j\otimes x_k$). We always use the degree grading on $F$: the $m^{\rm th}$ graded component $F_m$ of $F$ is $V^m$. A {\it degree graded} algebra $A$ is a quotient of $F$ by a proper graded ideal $I$ ($I$ is graded if it is the direct sum of $I\cap F_m$). This ideal is called the {\it ideal of relations} of $A$. If $x_1,\dots,x_n$ is a fixed basis in $V$ and the monomials in $x_j$ carry an ordering compatible with the multiplication in $A$, we can speak of the Gr\"obner basis of the ideal of relations of $A$. If $\Lambda$ is the set of leading monomials of the elements of such a basis, then the {\it normal words} for $A$ are the monomials in $x_j$ featuring no element of $\Lambda$ as a submonomial. Normal words form a basis in $A$ as a $\K$-vector space. Thus, knowing the normal words implies knowing the Hilbert series.

If $R$ is a subspace of the $n^2$-dimensional space $V^2$, then the quotient of $F$ by the ideal $I(V,R)$ generated by $R$ is called a {\it quadratic algebra} and denoted $A(V,R)$. Following \cite{popo}, we say that a quadratic algebra $A=A(V,R)$ is a {\it PBW-algebra} if there are linear bases $x_1,\dots,x_n$ and $g_1,\dots,g_m$ in $V$ and $R$ respectively such that $g_1,\dots,g_m$ is a Gr\"obner basis of the ideal $I=I(V,R)$ with respect to some  well-ordering on the monomials in $x_1,\dots,x_n$, compatible with multiplication. For a given basis $x_1,\dots,x_n$ in $V$, we get a bilinear form on $\K\langle x_1,\dots,x_n\rangle$ by setting $[u,v]=\delta_{u,v}$ for every pair of monomials $u$ and $v$ in $x_1,\dots,x_n$. The quadratic algebra $A^!=A(V,R^\perp)$, where $R^\perp=\{u\in V^2:[r,u]=0\ \text{for each}\ r\in R\}$, is called the
{\it Koszul dual algebra} of $A$, we will call it also just {\it dual algebra}. Note that up to an isomorphism, the graded algebra $A^!$ does not depend on the choice of a basis in $V$. It is well-known that $A$ is PBW if and only if $A^!$ is PBW.

A degree graded algebra $A$ is called {\it Koszul} if the graded left $A$-module $\K$ (the structure is provided by the augmentation map) has a free resolution $\dots\to M_m\to\dots\to M_1\to A\to\K\to 0$ with the second last arrow being the augmentation map, and with each $M_k$ generated in degree $k$. The latter means that the matrices of the maps $M_m\to M_{m-1}$ with respect to some free bases consist of homogeneous elements of degree $1$. Replacing left modules by the right ones leads to the same class of algebras. We use the following well-known properties of Koszul algebras:
\begin{align}\notag
&\text{every Koszul algebra is quadratic; every PBW-algebra is Koszul;}
\\
\notag
&\text{$A$ is Koszul $\iff$ $A^!$ is Koszul;}
\\
&\text{$H_A(-t)H_{A^!}(t)=1$ if $A$ is Koszul.}
\label{stm2}
\end{align}

In \cite[Chapter~6, Section~5]{popo} possible Hilbert series of Koszul algebras $A$ with small values of $\dim A_1$ and $\dim A_2$ are listed. The first case not covered there is $\dim A_1=\dim A_2=3$. In this case, only the Hilbert series of PBW algebras are given. It is stated in \cite{popo} that the complete list of Hilbert series of quadratic algebras satisfying $\dim A_1=\dim A_2=3$ as well as the complete list of the Hilbert series of Koszul algebras in this case are unknown. We fill this gap by proving the following results.

\begin{theorem}\label{main001}
For quadratic algebras $A$ satisfying $\dim A_1=\dim A_2=3$, the complete list of possible Hilbert series is $\{H_1,\dots,H_{11}\}$, where\hfill\break
\smallskip
\centerline{$\begin{array}{ll}
\text{$H_1(t){=}1{+}3t{+}3t^2;$}\Huhu&
\text{$H_2(t){=}1{+}3t{+}3t^2{+}t^3=(1+t)^3;$}\\
\text{$H_3(t){=}1{+}3t{+}3t^2{+}t^3{+}t^4{+}t^5{+}\cdots{=}\frac{1{+}2t{-}2t^3}{1-t};$}&
\text{$H_4(t){=}1{+}3t{+}3t^2{+}2t^3;$}\\
\text{$H_5(t){=}1{+}3t{+}3t^2{+}2t^3{+}t^4;$}&
\text{$H_6(t){=}1{+}3t{+}3t^2{+}2t^3{+}t^4{+}t^5{+}\cdots{=}\frac{1{+}2t{-}t^3{-}t^4}{1-t};$}\\
\text{$H_7(t){=}1{+}3t{+}3t^2{+}2t^3{+}2t^4{+}2t^5{+}\cdots{=}\frac{1{+}2t{-}t^3}{1-t};$}&
\text{$H_8(t){=}1{+}3t{+}3t^2{+}3t^3{+}3t^4{+}3t^5{+}\cdots{=}\frac{1{+}2t}{1-t};$}\\
\text{$H_9(t){=}1{+}3t{+}3t^2{+}4t^3{+}4t^4{+}4t^5{+}\cdots{=}\frac{1{+}2t{+}t^3}{1-t};$}\ &
\text{$\!\!H_{10}(t){=}1{+}3t{+}3t^2{+}4t^3{+}5t^4{+}6t^5{+}\cdots{=}\frac{1{+}t-2t^2{+}t^3}{(1-t)^2};$}\\
\rlap{\text{$\!\!H_{11}(t){=}1{+}3t{+}3t^2{+}5t^3{+}8t^4{+}13t^5{+}\cdots{=}\frac{1{+}2t{-}t^2{-}t^3}{1-t-t^2},$}}&\huHu
\end{array}$}
\noindent where the last series, starting from the third term, is formed by consecutive Fibonacci numbers.
\end{theorem}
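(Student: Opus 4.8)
The plan is to classify the possible series by reducing everything to the finite data of the $6$-dimensional relation space $R\subseteq V^2$ and, dually, the $3$-dimensional space $R^\perp\subseteq V^2$ (the relations of $A^!$). Since $A_1=V$ we have $\dim A_1=3$ automatically, while $\dim A_2=9-\dim R$, so the hypothesis $\dim A_2=3$ is exactly $\dim R=6$: thus $A$ is a $3$-generated algebra with a $6$-dimensional space of quadratic relations. The series $H_A$ is invariant under the $GL(V)$-action on $R$ induced by changes of the basis $x_1,x_2,x_3$, and each $\dim A_m$ is the rank of a matrix whose entries are linear in the coordinates of $R$; hence $H_A$ is unchanged under field extension, and for the exhaustiveness direction I may assume $\K$ algebraically closed. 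The strategy is then: (i) pin down $d_3:=\dim A_3$; (ii) for each admissible value of $d_3$, determine all continuations $d_4,d_5,\dots$; and (iii) exhibit an explicit algebra realising each series.

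The degree-$3$ term is the key invariant. Since the relations of $A$ in degree $3$ span $RV+VR\subseteq V^3$, we have $\dim A_3=27-\dim(RV+VR)$, and the orthogonality $(RV+VR)^\perp=R^\perp V\cap VR^\perp$ with respect to the monomial form yields the clean identity $d_3=\dim\bigl(R^\perp V\cap VR^\perp\bigr)$. Writing an element of $V^3$ as a $3\times3\times3$ tensor $T$, membership in $R^\perp V$ (resp. $VR^\perp$) says that every slice of $T$ in the third (resp. first) index lies in the fixed $3$-dimensional matrix space $R^\perp$; thus $d_3$ is the dimension of the space of tensors all of whose front and top slices lie in $R^\perp$. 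I would first record the trivial case: $d_3=0$ forces $A_{\geq 3}=0$ and hence $H_A=H_1$. The crucial step is the uniform bound $d_3\leq 5$, which I would prove by a multilinear analysis of the above slice condition, organised according to the congruence type of the net $R^\perp$ of $3\times3$ matrices; the extreme value $d_3=5$ will turn out to force the Fibonacci series $H_{11}$.

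With $d_3\in\{0,1,2,3,4,5\}$ in hand, the remaining task is to show that the low-degree data determine the whole series. Here I would use the surjections $V\otimes A_m\twoheadrightarrow A_{m+1}$ and $A_m\otimes V\twoheadrightarrow A_{m+1}$ together with the relation space to show that $(d_m)$ is eventually governed by a linear recurrence matching exactly one of $H_1,\dots,H_{11}$: concretely, in the cases admitting a finite Gr\"obner basis the normal words are counted by a transfer matrix (e.g. a single forbidden quadratic pattern gives the Fibonacci count of $H_{11}$, and a suitable pattern gives the linear growth of $H_{10}$), and the finitely many non-PBW representatives are handled by an explicit short resolution computing $H_A$. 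For each value of $d_3$ this produces a short list of continuations --- one each for $d_3\in\{0,3,5\}$, two for $d_3\in\{1,4\}$, and four for $d_3=2$ --- and simultaneously exhibits a representative algebra, settling both exhaustiveness and realisation in parallel.

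The main obstacle is step (ii) combined with the bound $d_3\leq 5$: ruling out every \emph{exotic} continuation, i.e. proving that no intermediate growth rate and no unlisted eventual pattern can occur. This is where the congruence analysis of the net $R^\perp$ does the real work, since a priori a $3$-dimensional space of $3\times3$ matrices can behave in many ways; the delicate points are separating the four series with $d_3=2$ and the two with $d_3=4$, and verifying that the genuinely infinite series $H_{10}$ (linear growth) and $H_{11}$ (Fibonacci growth) are the only non-eventually-constant possibilities.
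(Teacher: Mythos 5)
Your proposal is a plan rather than a proof: the two load-bearing claims are announced but not established, and you yourself flag them as ``the main obstacle.'' Concretely, (a) the uniform bound $\dim A_3\leq 5$ and, much more seriously, (b) the assertion that each value of $d_3$ admits only the listed continuations ($1,2,4,1,2,1$ continuations for $d_3=0,1,\dots,5$) are exactly the content of the theorem, and neither is derived. The tools you name for (b) are too weak: the surjections $V\otimes A_m\twoheadrightarrow A_{m+1}$ give only $\dim A_{m+1}\leq 3\dim A_m$, which does not exclude, say, a series continuing $1+3t+3t^2+2t^3+2t^4+t^5+\cdots$ or $1+3t+3t^2+4t^3+6t^4+\cdots$; ruling these out requires structural information about $R$ itself, not just the sequence of dimensions. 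Likewise, the ``congruence analysis of the net $R^{\perp}$'' to which you defer both (a) and (b) is a classification of $3$-dimensional spaces of $3\times 3$ matrices up to equivalence --- itself a substantial piece of linear algebra that you do not carry out. Your correct observations (that $\dim R=6$, that field extension is harmless, that $d_3=\dim(R^{\perp}V\cap VR^{\perp})$, and that $d_3=0$ gives $H_1$) reduce the problem to a cleaner form but do not touch its core.

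For comparison, the paper resolves exactly this difficulty by a different reduction: Lemma~\ref{split} proves that (over an infinite field) every $6$-dimensional $R\subset V\otimes V$ satisfies one of three structural alternatives --- $R$ is complemented by some $V\otimes L$ or $L\otimes V$ with $\dim L=1$ (P1), $R$ contains some $V\otimes L$ or $L\otimes V$ (P2), or $R=\spann\{x\otimes Tx\}$ (P3). Each alternative yields explicit normal forms for the defining relations (e.g.\ under (P1) one gets $A_n=x_3^{n-1}V$, so the tail of the series is controlled by at most three further Gr\"obner basis elements), and the finitely many resulting cases are settled by inspecting leading monomials and counting normal words. The dual of that trichotomy is precisely a statement about the $3$-dimensional net $R^{\perp}$, so your instinct about where the real work lies is sound; but until you supply a proof of the trichotomy (or of your claims (a) and (b) by some other means), the exhaustiveness half of the theorem remains open in your write-up. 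The realisation half would also need the explicit table of eleven algebras with verified Gr\"obner bases, which you gesture at but do not provide.
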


As we have mentioned above, the complete list of Hilbert series of PBW algebras $A$ satisfying $\dim A_1=\dim A_2=3$ can be found in \cite{popo}. It consists of $H_j$ with $j\in\{2,7,8,9,10,11\}$.

\begin{theorem}\label{main002}
For Koszul algebras $A$ satisfying $\dim A_1=\dim A_2=3$, the complete list of Hilbert series consists of $H_j$ with $j\in\{2,5,6,7,8,9,10,11\}$. That is, for each $j\in \{2,5,6,7,8,9,10,11\}$, there is a Koszul algebra $A$ satisfying $H_A=H_j$, while for $j\in\{1,3,4\}$, every quadratic algebra $A$ satisfying $H_A=H_j$ is non-Koszul.
\end{theorem}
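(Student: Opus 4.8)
The plan is to treat the two assertions separately: realizing each of $H_2,H_5,H_6,H_7,H_8,H_9,H_{10},H_{11}$ by an actual Koszul algebra, and ruling out $H_1,H_3,H_4$ for \emph{every} quadratic $A$ with that series. Throughout I would lean on the three facts in \eqref{stm2}: PBW $\Rightarrow$ Koszul, Koszul duality $A\leftrightarrow A^!$, and the numerical identity $H_A(-t)H_{A^!}(t)=1$ for Koszul $A$. It is convenient to recall that $\dim A_1=\dim A_2=3$ forces $\dim R=6$, so that $A^!$ has $3$ generators and $3$ relations.

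\textbf{Realizability.} For $j\in\{2,7,8,9,10,11\}$ there is nothing to do: these are exactly the Hilbert series of PBW algebras recorded after Theorem~\ref{main001}, and every PBW algebra is Koszul. The genuine content is $H_5$ and $H_6$, which are not PBW series, so the witnessing algebras are Koszul but not PBW and their Koszulity cannot be read off from a Gr\"obner basis. For $H_5$ I would use (the dual of) the algebra advertised in the abstract, $A=\K\langle x,y,z\rangle/(xx+yx,\,xz,\,zy)$, for which $H_{A^!}=H_5$; for $H_6$ I would produce an analogous explicit presentation. In each case I would first compute $H_A$ from a Gr\"obner basis of the ideal of relations to confirm the series, and then prove Koszulity by a method not requiring PBW --- e.g. exhibiting a Koszul filtration of $A$, or recognizing $A^!$ as assembled from lower-dimensional Koszul algebras by a Koszulity-preserving operation (tensor or skew product), the factorization $H_5=(1+t)(1+2t+t^2+t^3)$ being a hint in this direction. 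Koszulity of $A$ then yields Koszulity of $A^!$, and whichever side lies in our class carries the desired $H_5$ or $H_6$.

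\textbf{The non-Koszul series.} For $j\in\{1,3\}$ a purely numerical obstruction suffices. If such an $A$ were Koszul then $H_{A^!}(t)=1/H_A(-t)$ would be a power series with nonnegative integer coefficients. But $1/H_1(-t)=1/(1-3t+3t^2)$ has coefficients $1,3,6,9,9,0,-27,\dots$, negative in degree $6$, and $1/H_3(-t)$ first goes negative in degree $7$; a Hilbert series cannot have negative coefficients, so neither $H_1$ nor $H_3$ is Koszul. The case $H_4=1+3t+3t^2+2t^3$ is the crux, because this test is inconclusive: $H_4(-t)=(1-2t)(1-t+t^2)$, the second factor having roots of modulus $1$, so $1/H_4(-t)=1+3t+6t^2+11t^3+\cdots$ grows like $2^m$ and stays strictly positive. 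Here I would argue structurally. Since $H_4$ is a polynomial, any $A$ with $H_A=H_4$ is finite dimensional with $A_4=0$, i.e. $R\otimes V^2,\ V\otimes R\otimes V,\ V^2\otimes R$ already span $V^4$. Using the determination of the relation spaces giving $H_4$ from the case analysis behind Theorem~\ref{main001}, I would show that for every such $R$ these three subspaces of $V^4$ fail to generate a distributive lattice --- equivalently $\mathrm{Tor}^A_{3,4}(\K,\K)\neq 0$ --- which already contradicts Koszulity. Once $R$ is pinned down this is a finite linear-algebra verification.

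The main obstacle is the $H_4$ case. Unlike $H_1$ and $H_3$ it cannot be dispatched by a coefficient sign, and unlike the realizability half it is a universal statement: the obstruction must be produced for every quadratic algebra with this Hilbert series, not merely for one example, so it rests on first classifying the admissible relation spaces (the hard part underlying Theorem~\ref{main001}) and then locating one honest homological obstruction --- the degree-$4$ distributivity failure --- uniformly across all of them. Proving Koszulity of the non-PBW witnesses for $H_5$ and $H_6$ is the second, lesser difficulty, since there one needs only to build a single good example and check a sufficient condition for Koszulity by hand.
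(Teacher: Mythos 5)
Your decomposition of the problem matches the paper's (Propositions~\ref{kokoko} and~\ref{noko}), and the routine parts are fine: the PBW realizations for $j\in\{2,7,8,9,10,11\}$ and the negative-coefficient obstruction killing $H_1$ and $H_3$ are exactly what the paper does. But at both of the points you yourself identify as the real content, you stop at a plan rather than a proof. For $H_5$ and $H_6$ you pick the right witness (the paper's $B=\K\langle x,y,z\rangle/(xx+yx,\,xz,\,zy)$ with $B^!=A_5$, and an analogue for $H_6$), but you only name candidate methods --- a Koszul filtration, or recognizing the algebra as a Koszulity-preserving product on the strength of the factorization $H_5=(1+t)(1+2t+t^2+t^3)$ --- without verifying that either applies; a numerical factorization of the Hilbert series does not by itself produce a tensor or skew-product decomposition of the algebra. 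The paper instead writes down the putative Koszul complexes \eqref{koco10} and \eqref{koco1} explicitly and proves their exactness by computing the right annihilators of $x$, $x+y$ and $z$ from Gr\"obner bases (Lemma~\ref{now}, after a linear change of variables). Some such concrete verification is unavoidable here, since these are precisely the non-PBW Koszul examples.

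The $H_4$ case is the more serious gap. Your proposed obstruction --- that for \emph{every} admissible $R$ the lattice generated by $R\otimes V^2$, $V\otimes R\otimes V$, $V^2\otimes R$ in $V^4$ fails to be distributive, equivalently $\mathrm{Tor}^A_{3,4}(\K,\K)\neq 0$ --- is asserted, not proved, and there is reason to doubt the obstruction sits that low: the paper's Proposition~\ref{noko} shows only that such an $A$ is not \emph{numerically} Koszul, and to do so it has to push the count to degree $6$, where $\dim A^!_6\leq 84$ while $1/H_4(-t)$ demands $85$; the counts in degrees $\leq 5$ are consistent with Koszulity along the route the paper takes. Your plan also presupposes a classification of all relation spaces $R$ with $H_{A(V,R)}=H_4$, which the case analysis behind Theorem~\ref{main001} does not actually deliver; the paper sidesteps this by extracting from Lemmas~\ref{split}, \ref{II} and~\ref{III} only that (P1) must hold, so that $A^!$ has three relations with leading monomials $xx$, $xy$, $xz$, and then running a normal-word count on $A^!$ against the coefficients of $1/H_4(-t)$. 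As it stands, your key step for $H_4$ is both unverified and not obviously true in the degree you place it.
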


As a consequence of these classification Theorems \ref{main001} and \ref{main002}, we can find an algebra, which serves as a counterexample to another problem formulated in the Polishchuk and Positselski book \cite{popo}.

\begin{conjecture}\label{conj} {\rm (\cite{popo}, Chapter 7, Sec. 1, Conjecture 2)}
Any Koszul algebra  of finite global homological dimension $d$ has $\dim A_1 \geq d$.   By duality this is equivalent to the following statement: for a Koszul algebra $B$ with $B_{d+1}=0$ and $B_{d} \neq 0$ one has $\dim B_1 \geq d$.
\end{conjecture}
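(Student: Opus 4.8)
The plan is to refute the conjecture through Koszul duality, letting the dimension anomaly sit in the dual algebra rather than in $A$ itself. I take $A=\K\langle x,y,z\rangle/I$, where $I$ is generated by $R=\spann\{xx+yx,\,xz,\,zy\}$, so that $\dim A_1=3$ at the outset. First I would compute the dual relations: with respect to the monomial form $[u,v]=\delta_{u,v}$, the conditions $[xx+yx,u]=[xz,u]=[zy,u]=0$ cut out the $6$-dimensional space $R^\perp=\spann\{xx-yx,\,xy,\,yy,\,yz,\,zx,\,zz\}$. Thus $A^!=A(V,R^\perp)$ has $\dim A^!_1=\dim A^!_2=3$ and lands precisely in the class governed by Theorem~\ref{main002}.

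Next I would determine the Hilbert series of $A^!$. Ordering monomials by degree-lexicographic order with $x>y>z$, the six relations have leading terms $xx,xy,yy,yz,zx,zz$; checking the twelve overlaps, all resolve except the ambiguity $zxx$, whose S-polynomial reduces to the cubic $zyx$. Adjoining $zyx$ produces a finite Gr\"obner basis, and counting normal words degree by degree gives the sequence $1,3,3,2,1,0,0,\dots$, that is $H_{A^!}=1+3t+3t^2+2t^3+t^4=H_5$. In particular $A^!$ is finite-dimensional with top degree $4$.

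The hard part will be Koszulity, and this is where all the weight of the argument lies. Since $H_5$ is \emph{not} among the PBW series, no presentation of $A^!$ can be PBW; concretely the Gr\"obner basis above carries a genuine cubic generator, so the associated monomial algebra fails to be quadratic and the Gr\"obner route to Koszulity is closed off. The numerical identity in \eqref{stm2} is only necessary, so it does not decide the question either. I would therefore invoke Theorem~\ref{main002}, which for the series $H_5$ exhibits a Koszul algebra; the relation-and-series computation above identifies that algebra, up to isomorphism, with our $A^!$, so $A^!$ is Koszul. By \eqref{stm2} the algebra $A=(A^!)^!$ is then Koszul as well.

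Finally I would read off the global dimension. For a Koszul algebra one has $\mathrm{Ext}^i_A(\K,\K)\cong(A^!_i)^*$, so the projective dimension of $\K$, hence $\mathrm{gl.dim}\,A$, equals the top nonvanishing degree of $A^!$; here $A^!_4\neq0$ and $A^!_5=0$, giving the finite value $\mathrm{gl.dim}\,A=4$. As $\dim A_1=3<4=\mathrm{gl.dim}\,A$, the algebra $A$ contradicts the conjecture, and taking $B=A^!$ (a Koszul algebra with $B_5=0\neq B_4$, so $d=4$, yet $\dim B_1=3<d$) refutes the dual form as well. The single delicate step is the Koszulity of the non-PBW algebra $A^!$, which is exactly the investment made in proving Theorem~\ref{main002}.
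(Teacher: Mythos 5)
Your construction is correct and is precisely the paper's counterexample: your $A$ is the algebra with relations $xx+yx$, $xz$, $zy$ whose dual is $A_5$ from Table~1, and the paper likewise gets Koszulity from Proposition~\ref{kokoko} (the substance behind Theorem~\ref{main002} for the series $H_5$) and reads off global dimension $4$ from $H_{A^!}=H_5$. The one point to state carefully is that the bare statement of Theorem~\ref{main002} (existence of \emph{some} Koszul algebra with series $H_5$) would not suffice, since Proposition~\ref{SSS} exhibits a non-Koszul quadratic algebra with the same series; the identification must therefore be made at the level of presentations, as you in effect do by computing $R^\perp=\spann\{xx-yx,\,xy,\,yy,\,yz,\,zx,\,zz\}$ and matching it with the defining relations of $A_5$.
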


We can consider algebra $A_5$ from the Table 1. It is given by relations $xx-yx=xy=yy=yx=zx=zz=0$. The Hilbert series of this algebra is $H_A(t)= H_5= 1+3t+3t^2+2t^3+t^4$, so it is a polynomial of degree 4. Then quite straightforward arguments in Proposition~\ref{kokoko} ensure that the Koszul dual algebra $A_5^!$, given by relations $xx+yx=xz=zy=0$, is Koszul. This provides a counterexample to  Conjecture~\ref{conj}. Namely, the algebra $A^!$ serves as a counterexample to the first part of the conjecture: it has a global homological dimension 4. The algebra $A$ itself is a counterexample to the second statement of the conjecture, since its Hilbert series is a polynomial of degree 4.

Note that the list of series acquires two extra members $H_5$ and $H_6$ when the PBW condition is relaxed to Koszulity.

The key lemma, allowing to manage all possibilities, is the following linear algebra statement.

\begin{lemma}\label{split} Let $V$ be a $3$-dimensional vector space over an infinite field $\K$ and $R$ be a $6$-dimensional subspace of $V\otimes V$. Then at least one of the following statements is true$:$
\begin{itemize}\itemsep=-3pt
\item[{\rm (P1)}] there is a $1$-dimensional subspace $L\subset V$ such that $(V\otimes L)\oplus R=V\otimes V$ or $(L\otimes V)\oplus R=V\otimes V;$
\item[{\rm (P2)}] there is $1$-dimensional subspace $L\subset V$ such that $V\otimes L\subset R$ or $L\otimes V\subset R;$
\item[{\rm (P3)}] there is an invertible linear map $T:V\to V$ such that $R=\spann\{x\otimes Tx:x\in V\}$.
\end{itemize}
\end{lemma}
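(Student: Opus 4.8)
The plan is to dualize. Put $W=(V\otimes V)/R$ (so $\dim W=3$), let $\pi\colon V\otimes V\to W$ be the projection, and consider the bilinear map $\beta\colon V\times V\to W$, $\beta(x,y)=\pi(x\otimes y)$. Fixing a basis, $R^\perp$ is spanned by three $3\times 3$ matrices $B_1,B_2,B_3$, and $\beta(x,y)=(x^{\mathsf T}B_1y,\,x^{\mathsf T}B_2y,\,x^{\mathsf T}B_3y)$. In this language the three alternatives read: (P1) some $v$ makes $x\mapsto\beta(x,v)$ or $y\mapsto\beta(v,y)$ an isomorphism onto $W$; (P2) some $v\ne 0$ is a common right (or left) null vector of $B_1,B_2,B_3$; and (P3) there is an invertible $T$ with $B_iT$ skew-symmetric for all $i$, which is the matrix translation of $R=\spann\{x\otimes Tx:x\in V\}$. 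I would record at the outset the two cubic forms $g(v)=\det[B_1v\,|\,B_2v\,|\,B_3v]$ and $f(u)=\det[B_1^{\mathsf T}u\,|\,B_2^{\mathsf T}u\,|\,B_3^{\mathsf T}u]$.

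First, since $\dim(V\otimes L)+\dim R=9=\dim(V\otimes V)$, the direct-sum condition in (P1) is just triviality of the intersection, and a short computation shows that the first alternative of (P1) holds for $L=\langle v\rangle$ exactly when $g(v)\ne 0$, and the second for $L=\langle u\rangle$ exactly when $f(u)\ne 0$. As $\K$ is infinite, a nonzero polynomial has a nonvanishing point, so (P1) holds iff $f\not\equiv 0$ or $g\not\equiv 0$. Hence the problem reduces to the rigid regime $f\equiv g\equiv 0$: for every $x$ the form $\beta(x,\cdot)$ is degenerate and for every $y$ the form $\beta(\cdot,y)$ is degenerate. Next I would rule out invertible matrices in $R^\perp$. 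Right multiplication $B_i\mapsto B_iC$ by a fixed invertible $C$ preserves (P1),(P2),(P3), hence the regime $f\equiv g\equiv 0$, so if $R^\perp$ contained an invertible matrix one could normalize to $B_1=\mathrm{id}$; then $f\equiv g\equiv 0$ says $v,Av,Bv$ are dependent for all $v$ and likewise for $A^{\mathsf T},B^{\mathsf T}$ (with $A=B_2$, $B=B_3$), which I expect to force $A,B$ into a common proportionality, contradicting $\dim R^\perp=3$. Note that once $B_1=\mathrm{id}$ is present, (P2) is automatically impossible, which is what makes this normalization a genuine contradiction rather than a case. Consequently every matrix in $R^\perp$ is singular, the third cubic $h(\lambda)=\det(\lambda_1B_1+\lambda_2B_2+\lambda_3B_3)$ vanishes identically too, and all three slices of the underlying $3\times 3\times 3$ tensor are spaces of singular matrices.

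With this in hand, $R^\perp$ is a three-dimensional space of singular $3\times 3$ matrices whose two outer slices are also singular. I would run the kernel correspondence: $g\equiv 0$ assigns to each $v$ a nonzero $\mu(v)$ with $\bigl(\sum_i\mu_i(v)B_i\bigr)v=0$, landing in the cubic $\{h=0\}$; dimension-counting this quadratic map forces either rank-one degenerations that produce a common null vector, giving (P2), or the maximal nondegenerate situation, which I would identify with the orbit of the skew-symmetric net under the congruence $B\mapsto P^{-\mathsf T}BP^{-1}$ together with right multiplication, giving (P3). Here the skew-symmetric net is the distinguished three-dimensional all-singular net with no common null vector on either side, and the presentation $R=\spann\{x\otimes Tx\}$ is exactly its graph description.

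The hard part is this last classification: showing that an all-singular net with both outer slices singular and no common null vector must be congruent to the skew net. One cannot invoke the usual bounded-rank black boxes, because three-dimensional subspaces of compression spaces are all-singular with no common null vector yet are not skew; what excludes them is precisely the simultaneous vanishing of \emph{both} $f$ and $g$, so both outer slices must be used together, not just the singularity of $R^\perp$ itself. The second difficulty is that $\K$ is only infinite, not algebraically closed, so eigenvalue and closure arguments are unavailable: every step must be phrased as a polynomial identity (valid since a polynomial vanishing at all $\K$-points is zero), and the map $T$ must be produced over $\K$ itself, by checking that the quadratic kernel correspondence linearizes into the graph of a genuine element of $\mathrm{GL}(V)$.
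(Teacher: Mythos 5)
Your reduction is sound and is essentially the paper's own first move: pass to the $3$-dimensional space $R^\perp$ spanned by matrices $B_1,B_2,B_3$, observe that the two halves of (P1) for $L=\langle v\rangle$ are equivalent to $g(v)\neq 0$ resp.\ $f(v)\neq 0$ for the cubics $g(v)=\det[B_1v\,|\,B_2v\,|\,B_3v]$ and $f(u)=\det[B_1^{\mathsf T}u\,|\,B_2^{\mathsf T}u\,|\,B_3^{\mathsf T}u]$, that (P2) is the existence of a common right or left null vector, and that (P3) says $R^\perp T$ is the alternating net; infiniteness of $\K$ is used correctly to upgrade ``$g$ never takes a nonzero value'' to ``$g\equiv 0$ as a polynomial''. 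This is exactly the content of the paper's reformulations (its Lemmas on $S=R^\perp\subset L(V^*,V)$ with hypotheses (L1) and (L2)).

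The problem is that the argument stops where the lemma actually begins. The whole content of the statement is the classification you yourself label ``the hard part'': a $3$-dimensional net $S$ with no common right or left null vector and with $\dim Sv\leq 2$ and $\dim S^{\mathsf T}u\leq 2$ for all $v,u$ must be the skew net in suitable bases. You do not prove this; you describe an intended argument (``dimension-counting this quadratic map forces either rank-one degenerations \dots\ or the maximal nondegenerate situation, which I would identify with the orbit of the skew-symmetric net''), and the auxiliary step excluding invertible elements of $R^\perp$ is likewise only ``expected'' to work. Nothing in the proposal explains how the kernel correspondence $v\mapsto\mu(v)$ is shown to linearize into the graph of an element of $\mathrm{GL}(V)$, nor how the non-skew all-singular nets (e.g.\ the compression nets you rightly worry about) are eliminated using the vanishing of \emph{both} $f$ and $g$. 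By contrast, the paper's proof of precisely this step occupies all of its Section~3: it first shows by an explicit determinant identity that $\dim Sv=2$ for every $v\neq 0$ (the $1$-dimensional case is ruled out by comparing coefficients of two vanishing polynomials), then builds adapted bases of the two copies of $V$ and of $S$ and extracts, coefficient by coefficient, from the identities $f\equiv g\equiv 0$ that all remaining parameters vanish, landing on the alternating matrices over $\K$ itself with no closure or eigenvalue arguments. Until you supply an argument of comparable substance for that classification, the proposal is a correct setup plus a statement of the theorem to be proved, not a proof.
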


While (P1) and (P2) are not mutually exclusive, (P3) is incompatible with each of (P1) and (P2).

 In Section~2 we show that the Hilbert series of any quadratic algebra $A$, satisfying $\dim A_1=\dim A_2=3$ belongs to $\{H_1,\dots,H_{11}\}$, applying Lemma~\ref{split} and Gr\"obner basis techniques. Then in Section~3 we give a proof of Lemma~\ref{split}.
 In Section~4 we show that $H_j$ is the Hilbert series of a quadratic algebra $A$ for $1\leq j\leq11$. We also observe that $A$ can be chosen Koszul if $j\in\{2,5,6,7,8,9,10,11\}$ and that every algebra $A$ with $H_A=H_j$ for $j\in\{1,3,4\}$ is non-Koszul, thus completing the proofs of Theorems~\ref{main001} and~\ref{main002}.

Throughout the paper, when talking of Gr\"obner bases,
assume that the monomials carry the {\bf left-to-right degree lexicographical ordering} with the variables ordered by $x>y>z$ or $x_1>x_2>x_3$ (depending on how the variables are called in each case).

\section{Admissible series}

In this section we apply Lemma~\ref{split} and Gr\"obner basis arguments  to prove the following result. Next section will be dedicated to the proof of the Lemma~\ref{split} itself.

\begin{proposition}\label{adse} Let $A$ be a quadratic $\K$-algebra satisfying $\dim A_1=\dim A_2=3$. Then $H_A\in \{H_1,\dots,H_{11}\}$.
\end{proposition}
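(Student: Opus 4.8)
The plan is to use Lemma~\ref{split} as a trichotomy and, in each of the three structural cases, read off $H_A$ from a Gr\"obner basis computation with respect to the fixed left-to-right degree-lexicographic order. First I would record the cheap reductions. The hypotheses $\dim A_1=\dim A_2=3$ force $\dim V=3$ and $\dim R=9-3=6$, so Lemma~\ref{split} applies verbatim. Two operations leave $H_A$ unchanged and will cut the casework down drastically: the natural $GL(V)$-action on subspaces $R\subseteq V\otimes V$ (a change of basis of $V$), and the passage to the opposite algebra $A\mapsto A^{\mathrm{op}}$ (word reversal), whose relation space is $\tau(R)$, the image of $R$ under the flip of the two tensor factors. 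Since $\tau$ interchanges $V\otimes L$ with $L\otimes V$, the opposite-algebra symmetry lets me assume in (P1) and (P2) that it is the ``right'' subspace $V\otimes L$ that occurs, and the $GL(V)$-action then lets me normalise $L=\spann\{z\}$, so that $V\otimes L=\spann\{xz,yz,zz\}$.

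I would dispose of case (P3) first, the ``graph'' case $R=\spann\{x\otimes Tx:x\in V\}$ with $T$ invertible. Here $R$ depends on $T$ only through its $PGL(V)$-class: scaling $T$ by a nonzero constant rescales each generator $x\otimes Tx$ and hence fixes $R$, while conjugating $T$ is a change of basis of $V$. So I would run through the finitely many rational canonical forms of $T$ up to scalar (diagonalisable with the various coincidence patterns among the eigenvalue ratios, and the forms carrying a nontrivial Jordan block), write $R$ explicitly in each, and compute its Gr\"obner basis. At $T=\mathrm{id}$ one recovers the exterior algebra with $H=(1+t)^3=H_2$, and I expect every admissible $T$ to yield a PBW algebra with $H_A=H_2$, so that (P3) contributes only the series $H_2$.

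The bulk of the work lies in cases (P1) and (P2). In (P2), where $V\otimes L\subseteq R$, three of the relations are the monomials $xz,yz,zz$ themselves and the remaining three-dimensional part of $R$ sits in a complement governed by a handful of parameters; in (P1), where $(V\otimes L)\oplus R=V\otimes V$, the three monomials $xz,yz,zz$ already descend to a basis of $A_2$, giving $\dim A_2=3$ outright. In each case I would put $R$ into a short list of parametrised normal forms (using what freedom in $GL(V)$ remains after fixing $L$), compute the degree-$2$ leading monomials, and then resolve the overlaps/$S$-polynomials in degree $3$ and beyond to determine the set $\Lambda$ of leading monomials of a Gr\"obner basis. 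Counting normal words degree by degree then yields $H_A$, and the aim is to check that the outcome is always one of $H_1,\dots,H_{11}$.

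The main obstacle is that the ideal of relations need not have a \emph{finite} Gr\"obner basis and that the parameters describing $R$ range over a continuum, so one must show that only finitely many patterns of normal words can occur and that each reproduces one of the eleven generating functions -- including the rational ones with denominators $1-t$, $(1-t)^2$ and $1-t-t^2$, the last giving the Fibonacci count of $H_{11}$. The crux is therefore to understand how overlaps propagate to higher degrees in each sub-case, to see that the sequence $\dim A_m$ eventually obeys a fixed linear recurrence, and to match that recurrence (together with the low-degree initial data) to the corresponding $H_j$. The structural split furnished by Lemma~\ref{split} is exactly what makes this propagation analysis finite and tractable, reducing an a priori continuum of quadratic algebras to a controlled family of normal forms.
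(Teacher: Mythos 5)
Your skeleton is the paper's: invoke Lemma~\ref{split}, normalise $L$ using $GL(V)$ and the opposite-algebra symmetry, and extract $H_A$ from Gr\"obner bases in each branch; case (P3) is handled exactly as you describe (canonical forms of $T$, always giving $H_2$). But the proposal defers precisely the two steps that constitute the proof, and it overlooks one hypothesis. The small point first: Lemma~\ref{split} requires $\K$ infinite, while the Proposition is over an arbitrary field, so you must first note that extending scalars to the algebraic closure changes neither $\dim R$ nor $H_A$; algebraic closure is then also used essentially later (to normalise a residual quadratic form, and for Zariski-genericity arguments). More seriously, in case (P1) the observation that $xz,yz,zz$ descend to a basis of $A_2$ is not yet the engine of the argument. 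The engine is that $A_2=Vx_3$ (in your normalisation) forces $A_n=A_{n-1}x_3=Vx_3^{\,n-1}$ for all $n\ge 2$, whence $\dim A_n\le 3$ and the sequence $\dim A_n$ is non-increasing, and every Gr\"obner basis element of degree $n\ge 3$ has leading monomial of the very restricted form $x_3^{\,n-1}x_j$ (resp.\ $x_jx_3^{\,n-1}$). That is what converts ``how do overlaps propagate'' into a finite check yielding exactly $H_1,\dots,H_8$; without it your stated worry about infinite Gr\"obner bases and a continuum of parameters is unresolved in this branch.

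In case (P2) the gap is larger. The reduction of $R$ to ``a short list of parametrised normal forms'' is the bulk of the paper's work, and it cannot be done from (P2) alone: with only $V\otimes L\subset R$ the complementary part of $R$ ranges over a $9$-dimensional Grassmannian family, and there is no a priori reason the normal-word patterns fall into finitely many classes. The paper's argument uses in an essential way that (P1) \emph{fails} (so every nonzero $u\in V$ admits a nonzero $v$ with $u\otimes v\in R$), splits on whether $R$ contains a second slice $V\otimes M$, and deploys algebraic closure and genericity to land on roughly a dozen explicit spaces $R$ (the lists $(\ref{Q1})$--$(\ref{Q3})$), each of which is then a concrete finite Gr\"obner computation. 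Your proposal treats (P2) as a standalone condition and names this reduction as ``the crux'' without supplying a mechanism for it; as it stands, that is the missing proof rather than an obstacle you have overcome.
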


Since replacing the ground field $\K$ by a field extension does not change the Hilbert series of an algebra given by generators and relations, for the purpose of proving Proposition~\ref{adse}, we can without loss of generality assume that $\K$ is algebraically closed. Then $\K$ is infinite. By Lemma~\ref{split}, Proposition~\ref{adse} is an immediate corollary of the  following three lemmas. We essentially consider three possibilities given by Lemma~\ref{split}, and in each case find out which series are possible, looking mainly at the shape of the Gr\"obner basis.

\begin{lemma}\label{I} Let $V$ be a $3$-dimensional vector space over $\K$ and $R$ be a $6$-dimensional subspace of $V\otimes V$ such that condition {\rm (P1)} of Lemma~$\ref{split}$ is satisfied. Then for the quadratic algebra $A=A(V,R)$, $H_A\in\{H_1,\dots,H_8\}$.
\end{lemma}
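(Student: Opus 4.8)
The plan is to normalize the configuration by duality and a change of coordinates, read off the Gröbner staircase, and then control how the relations propagate. Since $\dim A_1=\dim A_2=3$ we have $\dim R=6$, and (P1) produces a line $L$ with $(L\otimes V)\oplus R=V\otimes V$ or $(V\otimes L)\oplus R=V\otimes V$. These two alternatives are interchanged by passing to the opposite algebra $A^{\mathrm{op}}=A(V,\tau R)$, where $\tau$ is the flip on $V\otimes V$; as $H_{A^{\mathrm{op}}}=H_A$ and the Hilbert series is unchanged by a linear change of basis, I may assume $(L\otimes V)\oplus R=V\otimes V$ with $L=\spann\{z\}$. Then $R$ is a complement of $\spann\{zx,zy,zz\}=zV$, so it has a reduced basis $g_1=xx-a_1,\dots,g_6=yz-a_6$ whose leading monomials (for the left-to-right degree-lex order with $x>y>z$) are exactly the six degree-$2$ words not beginning with $z$, with every $a_i\in zV$.

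Next I record the normal words of $\{g_1,\dots,g_6\}$: they are precisely $z^k$, $z^kx$ and $z^ky$, which already gives the a priori bound $\dim A_m\le 3$ for all $m$, i.e. $H_A\le H_8$ coefficientwise. More importantly, the relations force every degree-$2$ product beginning with $x$ or $y$ to lie in $zV$, whence $A_2=zA_1$ and, inductively (splitting off the first two letters of a monomial), $A_{m+1}=zA_m$ for all $m\ge 1$; equivalently $A_m=z^{m-1}V$. Thus left multiplication $\lambda_z\colon A_m\to A_{m+1}$ is surjective with kernel $W_m/W_{m-1}$, where $W_k=\{v\in V: z^kv=0\}$ is an increasing filtration of $V$, and $\dim A_{m+1}=3-\dim W_m$. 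Since $zx,zy,zz$ are normal we have $W_0=W_1=0$, so the dimension sequence begins $3,3$ and is non-increasing.

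I then complete the Gröbner basis. All ambiguities of $g_1,\dots,g_6$ occur in degree $3$ (two degree-$2$ leading words overlap in a single letter), and, using $a_i\in zV$, every such S-polynomial reduces into $\spann\{z^2x,z^2y,z^2z\}$. Hence either all ambiguities resolve, in which case the basis is complete and $H_A=H_8$, or new relations appear whose leading monomials are normal words, necessarily of the special form $z^kx$, $z^ky$ or $z^k$. Keeping only minimal exponents, the reduced basis acquires at most one generator of each of these three types, so $\dim A_m$ is a sum of three threshold step-functions and $H_A$ is a descending staircase starting $3,3,\dots$. To match the list it remains to establish the decisive constraint that \emph{once two consecutive dimensions agree from degree $2$ on, the sequence is constant thereafter}, i.e. $W_{m-1}=W_m$ forces $W_k=W_{m-1}$ for all $k\ge m-1$. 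Granting this, enumerating the admissible staircases yields exactly $H_1,\dots,H_8$ (for instance $\dim A_3=0$ gives $H_1$ and $\dim A_3=3$ gives $H_8$, with the intermediate drops producing $H_2,\dots,H_7$).

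The main obstacle is exactly this stabilization step: the bare inequalities $\dim A_{m+1}\le\dim A_m\le 3$ also permit forbidden sequences such as $3,3,1,1,0,\dots$ or $3,3,3,1,\dots$, which must be ruled out. I expect to exclude them by exploiting the multiplicative structure beyond mere surjectivity of $\lambda_z$: namely that $\lambda_z$ commutes with every right multiplication $\rho_c$, and that $v\in W_k$ forces $P(v,c)\in W_{k+1}$ for all $c$, where $wc=zP(w,c)$ defines $P$ via $A_2=zV$. Concretely, when $\lambda_z\colon A_m\to A_{m+1}$ is already bijective, these constraints, together with a direct inspection of the at most three extra generators and their mutual ambiguities, should propagate bijectivity to all higher degrees and thereby pin the staircase to one of the eight listed shapes.
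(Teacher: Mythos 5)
Your setup coincides with the paper's: reduce to $(L\otimes V)\oplus R=V\otimes V$ via the opposite algebra, choose the basis so that the six leading monomials are the words not beginning with $z$, deduce $A_m=z^{m-1}V$ and hence $\dim A_m\le 3$ non-increasing, and observe that any further reduced Gr\"obner basis element must have leading monomial $z^{k}x$, $z^{k}y$ or $z^{k}$. Up to that point you are on the right track. But the proof then stops exactly where the real work begins: you state yourself that the ``sum of three threshold step-functions'' picture still admits forbidden sequences such as $3,3,3,1,\dots$ or $3,3,2,2,1,\dots$, and you only \emph{announce} that the stabilization property (two equal consecutive dimensions from degree $2$ on force constancy) ``should'' follow from the maps $\lambda_z$ and $P$. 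This is not a proof. The property is not a formal consequence of $A_{m+1}=zA_m$ and the filtration $W_k=\{v:z^kv=0\}$: from $W_{m-1}=W_m$ one cannot deduce $W_m=W_{m+1}$ by linear algebra alone (the attempted argument ``$z^{m+1}v=0\Rightarrow z^mv=0$'' is circular), and nothing in your write-up rules out a reduced Gr\"obner basis with thresholds such as $a=2$, $b=5$, which would give the series $1+3t+3t^2+2t^3+2t^4+2t^5+t^6+\cdots$, absent from the list.

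What actually closes the gap in the paper is a consecutiveness argument for the Gr\"obner basis, carried out as a case analysis on $\dim A_3\in\{0,1,2,3\}$: because the only overlap ambiguities created by an element with leading monomial $z^{n-1}x_j$ together with the quadratic relations live in degree $n+1$, a new reduced Gr\"obner basis element of degree $n+1$ can appear only if one appeared in degree $n$, and its leading monomial is confined to the normal words $z^{n}x$, $z^{n}y$, $z^{n+1}$ not yet killed. Tracking which of these can occur at each stage (e.g.\ $\dim A_3=3$ forces the six quadratic relations to be a complete Gr\"obner basis, hence $H_8$; a degree-$3$ element with leading monomial $z^3$ gives $z^3=0$ and hence $H_4$ or $H_2$; and so on) is precisely what pins the dimension sequence to the eight admissible staircases. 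You need to either carry out this overlap bookkeeping or find a genuine substitute for it; as written, the central claim of the lemma is assumed rather than proved.
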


\begin{lemma}\label{II} Let $V$ be a $3$-dimensional vector space over an algebraically closed field $\K$ and $R$ be a $6$-dimensional subspace of $V\otimes V$ such that condition {\rm (P2)} of Lemma~$\ref{split}$ is satisfied, while {\rm (P1)} fails. Then for the quadratic algebra $A=A(V,R)$, $H_A\in\{H_1,H_2,H_3,H_7,H_9,H_{10},H_{11}\}$.
\end{lemma}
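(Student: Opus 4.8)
The plan is to put $(V,R)$ into a normal form and then reduce the whole question to a two-generated Gr\"obner computation. Since passing to the opposite algebra $A^{\mathrm{op}}$ preserves the Hilbert series and interchanges the two inclusions in {\rm (P2)} (as well as the two alternatives in {\rm (P1)}), I may assume $L\otimes V\subseteq R$; fix a basis $x,y,z$ of $V$ with $L=\langle x\rangle$, so that $xx,xy,xz\in R$, put $W=\langle y,z\rangle$ and write $R=(\langle x\rangle\otimes V)\oplus R'$ with $R'\subseteq W\otimes V$ a $3$-dimensional complement. The gain from this choice is that $xx=xy=xz=0$ forces $x\cdot A_{\geq1}=0$, so every normal word contains $x$ only in last position; moreover every $S$-polynomial built from one of $xx,xy,xz$ and a relation supported on $W\otimes V$ reduces to $0$, because every reduction reinstates a factor $x\cdot(\text{letter})$. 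Hence the Gr\"obner basis of $I(V,R)$, and so $H_A$, is governed by the overlaps internal to $R'$ alone.

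Next I would translate the failure of {\rm (P1)}. Identifying $W\otimes V$ with $V\oplus V$ via the $y$- and $z$-rows, one has $R'=\ker[M\,|\,N]$ for a full-rank pair of $3\times3$ matrices $M,N$. The alternative $(V\otimes L')\oplus R=V\otimes V$ of {\rm (P1)} can never occur, since $x\otimes v\in(V\otimes\langle v\rangle)\cap R$ for every $v$; thus {\rm (P1)} collapses to its other alternative, and $(L'\otimes V)\oplus R=V\otimes V$ holds for $L'=\langle ay+bz\rangle$ exactly when $\{(aw,bw):w\in V\}$ meets $R'$ only in $0$, i.e. when $\det(aM+bN)\neq0$. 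Therefore the hypothesis ``{\rm (P1)} fails'' becomes the single clean condition that the pencil be identically singular, $\det(aM+bN)\equiv0$.

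I would then organise the analysis by $s=\dim\bigl(R'\cap(W\otimes\langle x\rangle)\bigr)\in\{0,1,2\}$, the number of independent relations $\alpha\,yx+\beta\,zx$. Killing the letter $x$ in the second slot maps $R'$ onto $\pi(R')\subseteq W\otimes W$ of dimension $3-s$, and $B:=A(W,\pi(R'))$ is a two-generated quadratic algebra with $3-s$ relations, whose Hilbert series lies on the known short list for two generators (here the algebraic closedness of $\K$ lets me bring the relations of $B$ to normal form). Since $x\cdot A_{\geq1}=0$, the two-sided ideal $J=AxA$ equals $Ax$ and $A/J\cong B$, giving the exact sequences $0\to J_m\to A_m\to B_m\to0$, so $H_A=H_B+H_J$ with $J_m=A_{m-1}x$. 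When $s=2$ one has $yx=zx=0$, hence $wx=0$ for every nonempty $w$, so $J=\langle x\rangle$ and $H_A=t+H_B$; the first columns of $M$ and $N$ vanish, so $\det(aM+bN)\equiv0$ is automatic, and $B$ has a single relation, so $H_B=\frac{1+t}{1-t-t^2}$ if that relation is a perfect square $\ell\otimes\ell$ with $\ell\in W$ and $H_B=(1-t)^{-2}$ otherwise. These yield precisely $H_A=t+(1-t)^{-2}=H_{10}$ and $H_A=t+\frac{1+t}{1-t-t^2}=H_{11}$.

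The cases $s=1$ and $s=0$ are to be handled the same way: the recursion $\dim A_m=\dim B_m+\dim A_{m-1}-\dim\ker(\cdot x|_{A_{m-1}})$ determines $H_A$ once the right-annihilator of $x$ is understood, while the singular-pencil condition cuts the admissible configurations of leading monomials of $R'$ down to finitely many shapes yielding the remaining series $H_1,H_2,H_3,H_7,H_9$. I expect the main obstacle to be exactly this residual case analysis: one must (i) check that $\det(aM+bN)\equiv0$ rules out every shape that would produce one of the forbidden series $H_4,H_5,H_6,H_8$, and (ii) resolve, for the shapes with an infinite Gr\"obner basis, the stable pattern of higher-degree leading monomials that certifies the rational series $H_7,H_9$ (rather than some series off the list). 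Tracking the rank of right multiplication by $x$ in the borderline $s=1$ configurations, where a single relation $\alpha\,yx+\beta\,zx$ only partially obstructs the extension $w\mapsto wx$, is the delicate computational core.
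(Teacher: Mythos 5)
Your setup is sound and matches the paper's opening moves: after passing to the opposite algebra you may assume $L\otimes V\subseteq R$ with $L=\langle x\rangle$, so that $xA_{\geq 1}=0$ and the first alternative of (P1) is automatically impossible; your reformulation of the failure of (P1) as the identical vanishing of $\det(aM+bN)$ is correct and is exactly equivalent to the paper's working condition that every nonzero $u\in V$ admits a nonzero $v$ with $u\otimes v\in R$. Your case $s=2$ is also handled correctly and coincides with the paper's Case 1 ($V\otimes L\subseteq R$), yielding $H_{10}$ and $H_{11}$ via the classification of a single quadratic relation in two variables over an algebraically closed field.

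The genuine gap is that the cases $s\in\{0,1\}$ --- which is where essentially all of the content of the lemma lives --- are not proved but only announced, and you yourself flag them as ``the delicate computational core.'' Nothing in the proposal establishes that these configurations produce only $H_1,H_2,H_3,H_7,H_9$ and never $H_4,H_5,H_6,H_8$ or a series off the list of eleven; the assertion that the singular-pencil condition ``cuts the admissible configurations down to finitely many shapes'' is exactly the statement that needs proof. For comparison, the paper organizes these cases not by your parameter $s$ but by whether the pencil has a common kernel vector (equivalently, whether $V\otimes M\subseteq R$ for some line $M$), and reduces $R$ to three explicit families of normal forms. The hardest step is the no-common-kernel case, where one must first prove that $y\otimes z\notin R$ for \emph{every} basis $x,y,z$ of $V$; this requires the infinitude of $\K$ (a choice of $\theta\in\K\setminus\{0,1\}$ and a $2\times 2$ determinant argument) together with Zariski-genericity considerations, and only then does one land on the three relations of type (\ref{Q3}) giving $H_1$, $H_2$, $H_7$. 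In your framework the analogous work would amount to running Kronecker's classification of singular $3\times 3$ pencils and, for each normal form, resolving the higher-degree leading monomials of the Gr\"obner basis (several of the resulting algebras have Gr\"obner basis elements in degrees $3$ and $4$, e.g.\ the forms producing $H_2$ and $H_3$). Until that case analysis is actually carried out, the lemma is not proved.
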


\begin{lemma}\label{III} Let $V$ be a $3$-dimensional vector space over $\K$ and $R\subset V\otimes V$ be a subspace satisfying condition {\rm (P3)} of Lemma~$\ref{split}$. Then for the quadratic algebra $A=A(V,R)$, $H_A=H_2$.
\end{lemma}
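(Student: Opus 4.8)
The plan is to realize $A=A(V,R)$ as a graded (Zhang) twist of the exterior algebra, whose Hilbert series is visibly $(1+t)^3=H_2$. First I would rewrite $R$ conveniently. Let $S^2V\subset V\otimes V$ be the span of the squares $\{x\otimes x:x\in V\}$: this is the space of symmetric tensors, it equals the kernel of the canonical map $V\otimes V\to\Lambda^2V$, and hence it is exactly the space of quadratic relations of the exterior algebra $E=\Lambda V$, for which $H_E=1+3t+3t^2+t^3=(1+t)^3$. Writing $I\otimes T$ for the invertible map $x\otimes y\mapsto x\otimes Ty$, condition (P3) reads precisely $R=\spann\{x\otimes Tx\}=(I\otimes T)(S^2V)$; in particular $\dim R=\dim S^2V=6$.

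Next I would invoke the twist. Put $\phi=T^{-1}\in GL(V)$ and let $\sigma$ be the graded algebra automorphism of $E$ with $\sigma|_V=\phi$ (it exists by functoriality of $\Lambda$). On the underlying graded vector space of $E$ define a new product by $a*b=a\cdot\sigma^{p}(b)$ for $a$ of degree $p$; this is again associative and graded (a Zhang twist), and by construction $E^\sigma$ has the same graded components as $E$, so $H_{E^\sigma}=H_E$. To identify $E^\sigma$ with $A$, note that for $x,y\in V$ one has $x*y=x\cdot\phi(y)$, so the multiplication $V\otimes V\to E^\sigma_2$ equals $m\circ(I\otimes\phi)$, where $m$ is the multiplication of $E$; its kernel is $(I\otimes\phi)^{-1}(\ker m)=(I\otimes\phi^{-1})(S^2V)=(I\otimes T)(S^2V)=R$. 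Thus $E^\sigma$ is generated in degree $1$ with degree-$2$ relation space $R$, and since twisting sends quadratic (indeed Koszul) algebras to quadratic ones, $E^\sigma=A(V,R)=A$. Hence $H_A=H_{E^\sigma}=H_E=(1+t)^3=H_2$.

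The step I expect to need the most care is this last identification, namely that the twist introduces no relations beyond degree $2$; it rests on the general fact that a Zhang twist of a Koszul algebra is Koszul, after which the degree-$2$ computation above pins the relations down to $R$. If one prefers to avoid this machinery and stay within the Gr\"obner-basis framework of the paper, the alternative is uniform but more laborious: since the Hilbert series is unchanged under field extension and (P3) is preserved, one may assume $\K$ algebraically closed, and since $A$ depends only on the conjugacy class of $T$ (a change of generators conjugates $T$) and is unaffected by $T\mapsto\lambda T$, one reduces to $T$ in Jordan form. The diagonalizable types give skew relations $x_i^2=0$ and $x_ix_j=q_{ij}x_jx_i$ $(i<j)$, a quadratic Gr\"obner basis whose normal words are the strictly decreasing monomials, yielding $1+3t+3t^2+t^3$ immediately; the genuinely delicate case is the single $3\times3$ Jordan block, where the six quadratic relations are \emph{not} a complete Gr\"obner basis and one must compute the degree-$3$ overlaps to reduce the naive count of four degree-$3$ normal words to the correct value $1$ and to verify $A_4=0$.
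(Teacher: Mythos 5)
Your proof is correct, but it takes a genuinely different route from the paper. The paper reduces to an algebraically closed field, puts $T$ into Jordan normal form, and exhibits in each of the three cases an explicit linear basis of $R$ which it claims is a Gr\"obner basis with $x_3x_2x_1$ as the only normal word of degree $\geq 3$. Your argument instead observes that $R=(I\otimes T)(S^2V)$ is the relation space obtained from the exterior algebra $E=\Lambda V$ by the Zhang twist along the automorphism extending $T^{-1}$, so that $A\cong E^\sigma$ and $H_A=H_E=(1+t)^3=H_2$. All the ingredients you invoke are sound: $\spann\{v\otimes v\}$ is $6$-dimensional and is the relation space of $E$ in every characteristic; the twisted product is associative and preserves the graded components; and since $(\sigma\otimes\sigma)$ preserves the relation space of $E$, the twist of a quadratic algebra is again quadratic with relation space $(I\otimes\sigma^{-1})(R_E)=R$, which pins $E^\sigma$ down as $A(V,R)$. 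What your approach buys is considerable: it is basis-free, needs no field extension or case analysis, and yields Koszulity of $A$ as a by-product. What it costs is the (standard but external) twisting machinery. Your cautionary remark about the alternative Gr\"obner-basis route is also well taken, and in fact more apt than you may realize: under the paper's left-to-right deglex order the leading monomial of the relation $\lambda x_2^2+x_2x_1$ in the non-diagonalizable cases is $x_2x_1$ rather than $x_2^2$, so the six quadratic relations listed there are not immediately a Gr\"obner basis with normal word $x_3x_2x_1$, and the overlap computations you anticipate are genuinely needed to finish that version of the argument.
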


\begin{proof}[Proof of Lemma~$\ref{I}$] Since (P1) is satisfied, there is a $1$-dimensional subspace $L\subset V$ such that $(V\otimes L)\oplus R=V\otimes V$ or $(L\otimes V)\oplus R=V\otimes V$. These two cases reduce to each other by passing to the algebra with the opposite multiplication. Thus we can assume that $(L\otimes V)\oplus R=V\otimes V$. Pick a basis $x_1,x_2,x_3$ in $V$ such that $x_3$ spans $L$. Since $(L\otimes V)\oplus R=V\otimes V$, there is a linear basis in $R$ of the form (we skip the symbol $\otimes$ for the rest of the proof):
\begin{equation}\label{baR}
r_{j,k}=x_jx_k-x_3u_{j,k}\ \ \text{for $1\leq j\leq 2$ and $1\leq k\leq 3$, where $u_{j,k}\in V$.}
\end{equation}
It follows that in the algebra $A$, $A_2=x_3V=x_3A_1$. Then $A_3=A_2V=x_3VV=x_3A_2=x_3^2V$. Iterating, we get $A_n=x_3A_{n-1}=x_3^{n-1}V$ for each $n\geq 2$. In particular, $\dim A_n\leq \dim A_{n-1}\leq 3$ for each $n\geq 2$.
We also know that $\dim A_1=\dim A_2=3$.

{\bf Case 1:} \ $\dim A_3=3$. This can only happen if $r_{j,k}$ form a Gr\"obner basis of the ideal $I=I(V,R)$. Since the leading monomials of these relations are $x_jx_k$ for $1\leq j\leq 2$ and $1\leq k\leq 3$, the normal words of degree $n\geq 3$ are $x_3^{n-1}x_j$ for $1\leq j\leq 3$. Hence $\dim A_n=3$ for $n\geq 3$ and $H_A=H_8$.

{\bf Case 2:} \ $\dim A_3=2$. This happens when there is exactly one degree $3$ element $g$ of the Gr\"obner basis of $I$.
The leading monomial of $g$ must have the shape $x_3^2x_j$ with $1\leq j\leq 3$. If $j=3$, we have $g=x_3^3$ and $x_3^3=0$ in $A$. Hence for $n\geq 4$, $A_n=x_3^{n-1}V=\{0\}$. Thus $H_A=1+3t+3t^2+2t^3=H_4$. It remains to consider the case $j\in\{1,2\}$. Swapping $x_1$ and $x_2$, if necessary, we can without loss of generality assume that $j=1$. We know that $\dim A_4\leq \dim A_3=2$. The case $\dim A_4=2$ can only happen if the relations $r_{j,k}$ together with $g$ form a Gr\"obner basis of $I$. In this case the normal words of degree $n\geq 3$ are $x_3^{n-1}x_k$ with $k\in\{2,3\}$. This gives $H_A=1+3t+3t^2+2t^3+2t^4+2t^5+{\cdots}=H_7$. It remains to consider the case $\dim A_4=1$. This happens when there is exactly one degree $4$ element $h$ in the Gr\"obner basis of $I$. The leading monomial of $h$ must have the shape $x_3^3x_k$ with $2\leq k\leq 3$. If $k=3$, we have $h=x_3^4$ and $x_3^4=0$ in $A$. Hence for $n\geq 5$, $A_n=x_3^{n-1}V=\{0\}$. Thus $H_A=1+3t+3t^2+2t^3+t^4=H_5$. Assume now that $k=2$. If the relations $r_{j,k}$ together with $g$ and $h$ do not form the Gr\"obner basis of $I$, there is a degree $5$ element $q$ of this Gr\"obner basis. By looking at the leading terms of $r_{j,k}$, $g$ and $h$, we see that the only possibility is for $q$ to be equal $x_3^5$ up to a non-zero scalar multiple. Again, this gives $H_A=H_5$. On the other hand, if $r_{j,k}$ together with $g$ and $h$ do form the Gr\"obner basis of $I$, the only normal word of degree $n\geq 4$ is $x_3^n$. Hence $H_A=1+3t+3t^2+2t^3+t^4+t^5+{\cdots}=H_6$.

{\bf Case 3:} \ $\dim A_3=1$. This happens when there are exactly two degree $3$ elements $g$ and $h$ of the Gr\"obner basis of $I$. By swapping $g$ and $h$, if necessary, we can assume that the leading terms of $g$ and $h$ are $x_3^2x_j$ and $x_3^2x_k$ respectively with $1\leq j<k\leq 3$. If $k=3$, we have $h=x_3^3$ and $x_3^3=0$ in $A$. Hence for $n\geq 4$, $A_n=x_3^{n-1}V=\{0\}$. Thus $H_A=1+3t+3t^2+t^3=H_2$. It remains to consider the case $j=1$, $k=2$. If the relations $r_{j,k}$ together with $g$ and $h$ do not form the Gr\"obner basis of $I$, there is a degree $4$ element $q$ in this Gr\"obner basis. By looking at the leading terms of $r_{j,k}$, $g$ and $h$, we see that the only possibility is for $q$ to be equal $x_3^4$ up to a non-zero scalar multiple. Again, this gives $H_A=H_2$. If $r_{j,k}$ together with $g$ and $h$ do form the Gr\"obner basis of $I$, the only normal word of degree $n\geq 3$ is $x_3^n$. Hence $H_A=1+3t+3t^2+t^3+t^4+t^5+{\cdots}=H_3$.

{\bf Case 4:} \ $\dim A_3=0$. Obviously, $H_A=1+3t+3t^2=H_1$.
\end{proof}

\begin{proof}[Proof of Lemma~$\ref{II}$] Since (P2) holds, there is a $1$-dimensional subspace $L$ of $V$ such that
$VL\subset R$ or $LV\subset R$ (we skip the symbol $\otimes$ throughout the proof). The cases $VL\subset R$ and $LV\subset R$ reduce to each other by passing to the algebra with the opposite multiplication. Thus we can assume that $LV\subset R$. Pick $x$ in $V$, which spans $L$. Since (P1) fails,
\begin{equation}\label{eee1}
\text{for each $u\in V\setminus\{0\}$, there is $v=v(u)\in V\setminus\{0\}$ such that $uv\in R$.}
\end{equation}
We shall verify that there are $y,z\in V$ such that $x,y,z$ is a basis in $V$ and at least one of the following conditions holds:
\begin{align}
R&=\spann\{xx,xy,xz,yx,zx,h\}\ \ \text{with $h\in\{yy,yz-azy,yz-zy+zz\}$ ($a\in\K$)};\label{Q1}
\\
R&=\spann\{xx,xy,xz,yy,zy,h\}\ \ \text{with $h\in\{yx-zz,yz-zx,yx,yz,zx,zz\}$};\label{Q2}
\\
R&=\spann\{xx,xy,xz,yy,zz,h\}\ \ \text{with $h\in\{yx+zx,yz+zy,yx+zx-yz-zy\}$}.\label{Q3}
\end{align}

{\bf Case 1:} \ $VL\subset R$. Pick arbitrary $u,v\in V$ such that $x,u,v$ is a basis in $V$. Then the $5$-dimensional space $LV+VL$ spanned by $S_0=\{xx,xu,xv,ux,vx\}$ is contained in $R$. Since $R$ is $6$-dimensional, it is spanned by $S_0\cup\{f\}$, where $f=auu+buv+cvu+dvv$ with $(a,b,c,d)\in\K^4$, $(a,b,c,d)\neq(0,0,0,0)$. Since $\K$ is algebraically closed, there is a non-zero $(p,s)\in\K^2$ such that $ap^2+(b+c)ps+ds^2=0$. Next, pick $(q,t)\in\K^2$ such that $(p,s)$ and $(q,t)$ are linearly independent. The non-degenerate linear substitution, in which old $u$ and $v$ are replaced by $pu+qv$ and $su+tv$ respectively, transforms $f$ into $g=\alpha uv+\beta vu+\gamma vv$ with non-zero $(\alpha,\beta,\gamma)\in\K^3$. If $\alpha=0$ and $\beta\neq 0$, we set $y=v$ and $z=\beta u+\gamma v$, while if $\alpha\neq 0$ and $\beta=0$, we set $y=\alpha u+\gamma v$ and $z=u$. This substitution transforms $g$ into a (non-zero) scalar multiple of $yz$. Now, with respect to the basis $x,y,z$,  $R$ is spanned by $S\cup\{yz\}$ with $S=\{xx,xy,xz,yx,zx\}$. If $\alpha=\beta=0$, we set $y=v$ and $z=u$ and observe that $R$ is spanned by $S\cup\{yy\}$. If $\alpha\beta\neq 0$ and $\alpha+\beta\neq 0$, we set $y=u+\frac{\gamma v}{\alpha+\beta}$, $z=v$ and observe that $R$ is
spanned by $S\cup\bigl\{yz+\frac{\beta}{\alpha}zy\bigr\}$. If $\alpha\beta\neq 0$ and $\alpha+\beta=\gamma=0$, with respect to $y=u$ and $z=v$, $R$ is spanned by $S\cup\{yz-zy\}$. Finally, if $\alpha\beta\gamma\neq 0$ and $\alpha+\beta=0$, we set $y=\frac{\alpha u}{\gamma}$ and $z=v$, with respect to which $R$ is spanned by $S\cup\{yz-zy+zz\}$. Thus (\ref{Q1}) is satisfied provided $VL\subset R$.

{\bf Case 2:} \ $VM\subset R$ for a $1$-dimensional subspace $M$ of $V$ such that $M\neq L$. Pick $u\in M\setminus\{0\}$. Since $L\neq M$, $x$ and $u$ are linearly independent. For $w\in V$ such that $x,u,w$ is a basis in $V$, $R=\spann(S_0\cup \{f\})$, where $S_0=\{xx,xu,xw,uu,wu\}$ and $f=aux+buw+cwx+dww$ with $(a,b,c,d)\in\K^4$, $(a,b,c,d)\neq(0,0,0,0)$. For $\alpha\in\K^*$ and $p,q\in\K$ we can consider the basis $x,y,z$ in $V$ defined by $u=\alpha y$ and $w=z+px+qy$. A direct computation shows that with respect to this basis, $R=\spann(S\cup \{g\})$, where $S=\{xx,xy,xz,yy,zy\}$ and
$$
g=(a\alpha+bp\alpha+cq+dpq)yx+(b\alpha +dq)yz+(c+dp)zx+dzz.
$$
If $d\neq 0$ and $ad=bc$, by choosing $\alpha=1$, $q=-\frac{b}{d}$ and $p=-\frac{c}{d}$, we turn $g$ into a (non-zero) scalar multiple of $zz$. If $d=0$ and $ad\neq bc$, by choosing $\alpha=\frac{d^2}{bc-ad}$, $q=-\frac{b}{d}$ and $p=-\frac{c}{d}$, we turn $g$ into a scalar multiple of $yx-zz$. If $d=0$ and $bc\neq 0$, by choosing $\alpha=-\frac{c}{b}$, $p=0$ and $q=\frac{a}{b}$, we turn $g$ into a scalar multiple of $yz-zx$. If $b=d=0$ and $c\neq 0$, by choosing $\alpha=1$, $p=0$ and $q=-\frac{a}{c}$, we turn $g$ into a scalar multiple of $zx$. If $c=d=0$ and $b\neq 0$, by choosing $\alpha=1$, $q=0$ and $p=-\frac{a}{b}$, we turn $g$ into a scalar multiple of $yz$. Finally, if $b=c=d=0$, by  choosing $\alpha=1$ and $p=q=0$, we turn $g$ into a scalar multiple of $yx$. Thus (\ref{Q2}) is satisfied provided $VM\subset R$ for a $1$-dimensional subspace $M$ different from $L$.

{\bf Case 3:} \ $VM\not \subseteq R$ for every $1$-dimensional subspace $M$ of $V$. This is precisely the negation of the assumptions of Cases~1 and~2. First, we shall verify that in this case
\begin{equation}\label{CA3}
\text{$yz\notin R$ whenever $x,y,z$ is a basis in $V$.}
\end{equation}
We argue by contradiction. Assume that (\ref{CA3}) fails. Then there are $y,z\in V$ such that $x,y,z$ is a basis in $V$ and  $yz\in R$. By (\ref{eee1}), there are non-zero $(a,b,c),(p,q,r)\in\K^3$ such that $z(ax+by+cz),(y+z)(px+qy+rz)\in R$. The assumption of Case 3 implies linear independence of $z$, $ax+by+cz$ and $px+qy+rz$. Indeed, if they were linearly dependent, using the inclusions $yz,z(ax+by+cz),(y+z)(px+qy+rz)\in R$, one easily finds a non-zero $u\in V$ such that $yu,zu\in R$. Since $xu\in R$, this implies $VM\subset R$ with $M$ being the linear span of $u$. Linear independence of $z$, $ax+by+cz$ and $px+qy+rz$ implies that $aq\neq bp$ and that
\begin{equation}\label{CA31}
R=\spann\{xx,xy,xz,yz,z(ax+by+cz),(y+z)(px+qy+rz)\}.
\end{equation}
Since $\K$ is infinite, we can pick $\theta\in\K\setminus\{0,1\}$. By (\ref{eee1}), there is a non-zero  $(\alpha,\beta,\gamma)\in\K^3$ such that $(y+\theta z)(\alpha x+\beta y+\gamma z)\in R$. By (\ref{CA31}), there exist $c_1,c_2,c_3\in\K$ such that
$$
(y+\theta z)(\alpha x+\beta y+\gamma z)=c_1yz+c_2z(ax+by+cz)+c_3(y+z)(px+qy+rz),
$$
where the equality holds in $\K\langle x,y,z\rangle$. Opening up the brackets in the above display, we obtain
$$
\alpha-pc_3=\beta-qc_3=\gamma-c_1-rc_3=\alpha\theta-ac_2-qc_3=\beta\theta-bc_2-qc_3=\gamma\theta-cc_2-rc_3=0.
$$
Plugging $\alpha=pc_3$ and $\beta=qc_3$ into $\alpha\theta-ac_2-qc_3=\beta\theta-bc_2-qc_3=0$, we get
$bc_2+(1-\theta)qc_3=ac_2+(1-\theta)pc_3=0$. Since $\theta\neq 1$ and $aq\neq bp$, the determinant $(1-\theta)(bp-aq)$ of this system of two linear equations on $c_2$ and $c_3$ is non-zero. Hence $c_2=c_3=0$. Since $\theta\neq 0$, the above display implies $\alpha=pc_3=0$, $\beta=qc_3=0$ and $\gamma=\frac{cc_2+rc_3}{\theta}=0$, which contradicts $(\alpha,\beta,\gamma)\neq (0,0,0)$. This contradiction proves (\ref{CA3}).

Now (\ref{CA3}) together with (\ref{eee1}) imply that
\begin{equation}\label{CA32}
\text{for each $u\in V\setminus L$, there is $(a_u,b_u)\in\K^2\setminus\{(0,0)\}$ such that $u(a_uu+b_ux)\in R$.}
\end{equation}
Observe that $a_u\neq 0$ for a Zarisski generic $u\in V$. Indeed, otherwise $VL\subset R$. Next, $b_u\neq 0$ for a Zarisski generic $u\in V$. Indeed, otherwise $R$ contains the $6$-dimensional space ${\cal S}$ of symmetric elements of $V^2$. Since $R$ also contains $LV$ and $LV\cap {\cal S}$ is one-dimensional, $\dim R\geq 8>6$, which is a contradiction. Thus we can pick $s,t\in V$ such that $x,s,t$ is a basis in $V$ and $a_sb_sa_tb_t\neq 0$. Now, using the inclusions $s(a_ss+b_ss), t(a_tt+b_tx)\in\R$, we can pick $p,q\in\K^*$ such that for $u=ps$ and $v=qt$, $u(x-u),v(x-v)\in R$.
For $a=a_{u+v}$ and $b=b_{u+v}$, according to (\ref{CA32}), we have $(a,b)\neq (0,0)$ and $(u+v)(au+av+bx)\in R$.
Then $R=\spann\{xx,xu,xv,u(x-u),v(x-v),(u+v)(au+av+bx)\}$. Now set $y=x-u$ and $z=x-v$. With respect to the basis $x,y,z$, the last equality can be rewritten as $R=\spann(S\cup\{c(y+z)x-a(yz+zy)\}$, where $S=\{xx,xy,xz,yy,zz\}$ with $c=b+2a$. If $c=0$, then $R=\spann(S\cup\{yz+zy\})$. If $a=0$, then $R=\spann(S\cup\{yx+zx\})$. If $ac\neq 0$, then replacing $y$ and $z$ by $\alpha y$ and $\alpha z$ for an appropriate $\alpha\in\K^*$, we get $R=\spann(S\cup\{yx+zx-yz-zy\})$. Thus (\ref{Q3}) is satisfied provided $VM\not \subseteq R$ for every $1$-dimensional subspace $M$ of $V$.

It remains to determine the Hilbert series of $A=A(V,R)$ when $R$ satisfies one of the conditions (\ref{Q1}), (\ref{Q2}) or (\ref{Q3}). If (\ref{Q1}) is satisfied, the defining relations $xx$, $xy$, $xz$, $yx$, $zx$, and $h$ of $A$ form a Gr\"obner basis of the ideal $I=I(V,R)$. If $h=yz-azy$ or $h=yz-zy+zz$, then the normal words of degree $n\geq 2$ are $z^ky^{n-k}$ for $0\leq k\leq n$. Since there are $n+1$ of them, we have $H_A=H_{10}$. If $h=yy$, then the normal words of degree $n\geq 2$ are all monomials in $y$ and $z$, which do not contain $yy$ as a submonomial. It is easy to see that the number $a_n$ of such monomials satisfies the recurrent relation $a_{n+2}=a_{n+1}+a_n$, which together with $a_2=3$ and $a_3=5$ implies $H_A=H_{11}$.

Next, assume that (\ref{Q2}) is satisfied. That is, $A$ is given by the relations $xx$, $xy$, $xz$, $yy$, $zy$ and $h$
with $h\in\{yx-zz,yz-zx,yx,yz,zx,zz\}$. If $h=yx-zz$, the defining relations together with $yzz$, $zzx$, $zzy$ and $zzz$ form a Gr\"obner basis of $I$. The only normal word of degree $\geq 3$ is $yzx$, which gives $H_A=H_2$. If $h=yz-zx$, then the defining relations together with $zzx$ form a Gr\"obner basis of $I$. The only normal word of degree $n\geq 3$ is $z^n$, which implies $H_A=H_3$. If $h\in \{yx,yz,zx,zz\}$, $A$ is monomial and therefore the defining relations form a Gr\"obner basis of $I$. If $h=zz$, the only normal word of degree $\geq 3$ is $yzx$ and $H_A=H_2$. If $h=zx$, the only normal words of degree $n\geq 3$ are $z^n$ and $yz^{n-1}$, which gives $H_A=H_7$. If $h=yz$, the only normal words of degree $n\geq 3$ are $z^{n-1}x$ and $z^n$, yielding $H_A=H_7$. If $h=yx$, the only normal words of degree $n\geq 3$ are $yz^{n-2}x$ $yz^{n-1}$, $z^{n-1}x$ and $z^n$. Hence $H_A=H_9$.

Finally, assume that (\ref{Q3}) is satisfied. That is, $A$ is given by the relations $xx$, $xy$, $xz$, $yy$, $zz$ and $h$
with $h\in\{yx+zx,yz+zy,yx+zx-yz-zy\}$. If $h=yx+zx-yz-zy$, the defining relations together with $yzx$, $yzy$ and $zyz$ form a Gr\"obner basis of $I$. There are no normal words of degree $\geq 3$ and therefore $H_A=H_1$. If $h=yz+zy$, the defining relations form a Gr\"obner basis of $I$. The only normal word of degree $\geq 3$ is $zyx$ and $H_A=H_2$. Finally, if $h=yx+zx$, the defining relations together with $yzx$ form a Gr\"obner basis of $I$. For $n\geq 3$ there are exactly 2 normal words of degree $n$ being the monomials in $y$ and $z$ in which $y$ and $z$ alternate: $yzyz\dots$ and $zyzy\dots$ Hence $H_A=H_7$. Since we have exhausted all the options, the proof is complete.
\end{proof}

\begin{proof}[Proof of Lemma~$\ref{III}$] The fact that $R$ is $6$-dimensional is straightforward. Indeed, $R$ is the image of the $6$-dimensional space of the symmetric elements of $V\otimes V$ under the invertible linear map $I\otimes T$. Now, replacing the ground field by a field extension does not change the Hilbert series of an algebra given by generators and relations. Hence, without loss of generality we can assume that $\K$ is algebraically closed. This allows us to pick a basis $x_1,x_2,x_3$ in $V$ with respect to which the matrix of $T$ has the Jordan normal form.

If $T$ has $3$ Jordan blocks, $T$ has the diagonal matrix with respect to the basis $x_1$, $x_2$, $x_3$ with the non-zero numbers (eigenvalues) $\lambda_1$, $\lambda_2$ and $\lambda_3$ on the diagonal. One easily sees that in this case $R$ is spanned by $x_j^2$ with $1\leq j\leq 3$ and $\lambda_k x_jx_k+\lambda_j x_kx_j$ with $1\leq j<k\leq 3$. If $T$ has $2$ Jordan blocks, we can assume that the size two block is in the left upper corner and corresponds to the eigenvalue $\lambda$, while the size one block is in the right lower corner and corresponds to the eigenvalue $\mu$. In this case $x_1^2$, $\lambda x_2^2+x_2x_1$, $x_3^2$, $x_1x_2+x_2x_1$, $\mu x_1x_3+\lambda x_3x_1$ and $\mu x_2x_3+x_3x_1+\lambda x_3x_2$ forms a linear basis in $R$. Finally, if $T$ has just one Jordan block corresponding to the eigenvalue $\lambda$, a linear basis in $R$ is formed by $x_1^2$,  $\lambda x_2^2+x_2x_1$, $\lambda x_3x_1+\lambda x_3x_2+x_3^2$, $x_1x_2+x_2x_1$, $x_1x_3+x_3x_1-x_2x_1$ and $x_2x_3+x_3x_2+x_3x_1$. In any case, this linear basis in $R$ is also a Gr\"obner basis in $I(V,R)$ with the  only normal word of degree $\geq 3$ being $x_3x_2x_1$. This gives $H_A=H_2$.
\end{proof}

This completes the proof of Proposition~\ref{adse}. Note that if $\K$ is algebraically closed and  $A$ is a quadratic algebra satisfying $H_A=H_j$ for $j\in \{8,9,10,11\}$, Lemma~\ref{split} can be applied and $A$ falls into one of the cases considered in the proofs of Lemmas~\ref{I} and~\ref{II}. Scanning the proofs, one sees that whenever $H_A=H_j$ for $j\in \{8,9,10,11\}$, $A$ is actually PBW and therefore Koszul. Since the Hilbert series or Koszulity do not notice an extension of the ground field, we can drop the condition that $\K$ is algebraically closed. This observation automatically implies the following  Koszulity result.

\begin{proposition}\label{autopbw} If $A$ is a quadratic algebra satisfying $H_A=H_j$ for $j\in \{8,9,10,11\}$, then $A$ is Koszul. Moreover, $A$ is PBW provided $\K$ is algebraically closed.
\end{proposition}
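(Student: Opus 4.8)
The plan is to reduce to the case of an algebraically closed ground field and then read the conclusion directly off the already-established proofs of Lemmas~\ref{I}--\ref{III}. Since neither the Hilbert series nor Koszulity of an algebra given by generators and relations is affected by extending the ground field, I would first replace $\K$ by its algebraic closure $\bar\K$, which is infinite, so that Lemma~\ref{split} becomes applicable. Each of $H_8,H_9,H_{10},H_{11}$ begins with $1+3t+3t^2+\cdots$, so $H_A=H_j$ for $j\in\{8,9,10,11\}$ forces $\dim A_1=\dim A_2=3$ and hence $\dim R=6$. It therefore suffices to prove that $A$ is PBW whenever $\K$ is algebraically closed: PBW implies Koszul, and Koszulity then descends back to the original field.

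By Lemma~\ref{split}, at least one of (P1), (P2), (P3) holds for $R$. The case (P3) is immediately excluded, since by Lemma~\ref{III} it would give $H_A=H_2$, which is not among $H_8,\dots,H_{11}$. This leaves exactly two situations: either (P1) holds, or (P1) fails while (P2) holds. These are precisely the hypotheses of Lemmas~\ref{I} and~\ref{II} respectively, so in each situation we may quote the corresponding case analysis.

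Suppose first that (P1) holds. Lemma~\ref{I} restricts $H_A$ to $\{H_1,\dots,H_8\}$, so the only series from our list that can occur is $H_8$. I would then revisit the proof of Lemma~\ref{I} and observe that $H_8$ arises only in Case~1, where the relations $r_{j,k}$ of~(\ref{baR}) form a Gr\"obner basis of $I(V,R)$; consequently $A$ is PBW. Now suppose (P1) fails and (P2) holds. Lemma~\ref{II} restricts $H_A$ to $\{H_1,H_2,H_3,H_7,H_9,H_{10},H_{11}\}$, so the possible series from our list are $H_9$, $H_{10}$, $H_{11}$. Inspecting the proof of Lemma~\ref{II} case by case, $H_9$ comes from the monomial relation $h=yx$ in~(\ref{Q2}), $H_{10}$ from $h\in\{yz-azy,\,yz-zy+zz\}$ in~(\ref{Q1}), and $H_{11}$ from $h=yy$ in~(\ref{Q1}); in each instance the explicit defining relations are shown to be a Gr\"obner basis with respect to the degree-lexicographic order, so $A$ is PBW.

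The work here is entirely bookkeeping rather than new computation, and the one point that must be checked carefully is that every appearance of $H_8,H_9,H_{10},H_{11}$ in the proofs of Lemmas~\ref{I} and~\ref{II} occurs along a branch in which \emph{no} higher-degree elements need to be adjoined to the defining relations to obtain a Gr\"obner basis; this ``no extra Gr\"obner generators'' feature is exactly what makes $A$ a PBW algebra, and one must also confirm that none of these four series is ever produced through a branch requiring such higher-degree generators. Having obtained PBW-ness, and hence Koszulity, over $\bar\K$, the Koszulity of $A$ over the original field $\K$ follows because Koszulity is insensitive to field extension, which completes the argument.
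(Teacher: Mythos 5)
Your proposal is correct and follows essentially the same route as the paper: the authors likewise pass to the algebraic closure, invoke Lemma~\ref{split}, note that (P3) only yields $H_2$, and then scan the case analyses in the proofs of Lemmas~\ref{I} and~\ref{II} to see that every branch producing $H_8,H_9,H_{10},H_{11}$ is one in which the defining quadratic relations already form a Gr\"obner basis, so $A$ is PBW and hence Koszul, with Koszulity descending to the original field since it is insensitive to field extension. Your explicit identification of which branches produce each of the four series, and your flagged check that no other branch produces them, is exactly the bookkeeping the paper compresses into ``scanning the proofs.''
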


\section{Proof of Lemma~\ref{split}}

We start by reformulating Lemma~\ref{split}. First, if we take a pairing on $V\otimes V$ as in the definition of a dual algebra, then in terms of $S=R^\perp$, Lemma~\ref{split} reads in the following way.

\begin{lemma}\label{split2} Let $V$ be a $3$-dimensional vector space over an infinite field $\K$ and $S$ be a $3$-dimensional subspace of $V\otimes V$. Then at least one of the following statements is true$:$
\begin{itemize}\itemsep=-3pt
\item[{\rm (P$1'$)}] there is a $2$-dimensional subspace $M\subset V$ such that $(V{\otimes}M){\oplus}S{=}V{\otimes}V$ or $(M{\otimes}V){\oplus}S{=}V{\otimes}V;$
\item[{\rm (P$2'$)}] there is a $2$-dimensional subspace $M\subset V$ such that $V\otimes M\supset S$ or $M\otimes V\supset S;$
\item[{\rm (P$3'$)}] there is an invertible linear map $T:V\to V$ such that $S=\spann\{x\otimes Ty-y\otimes Tx:x,y\in V\}$.
\end{itemize}
\end{lemma}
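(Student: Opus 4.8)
The plan is to turn $S$ into a three-dimensional space of $3\times3$ matrices and read the three alternatives off its two partial contractions. For $\ell\in V^*$ put $\Phi_\ell=(\ell\otimes\mathrm{id})|_S\colon S\to V$ and $\Psi_\ell=(\mathrm{id}\otimes\ell)|_S\colon S\to V$; these depend linearly on $\ell$ and so define maps $\Phi,\Psi\colon V^*\to\mathrm{Hom}(S,V)$. If $M=\ker\ell$ is a plane, then $M\otimes V=\ker(\ell\otimes\mathrm{id})$ and $V\otimes M=\ker(\mathrm{id}\otimes\ell)$; since $\dim S=\dim V=3$, this yields the dictionary $(M\otimes V)\oplus S=V\otimes V\iff\Phi_\ell$ invertible, $M\otimes V\supset S\iff\Phi_\ell=0$, and symmetrically for $V\otimes M$ and $\Psi_\ell$. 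Thus (P$1'$) asserts that some $\Phi_\ell$ or some $\Psi_\ell$ is invertible, while (P$2'$) asserts that $\Phi$ or $\Psi$ is non-injective.

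First I would clear away (P$1'$) and (P$2'$). Because $\K$ is infinite, the cubic form $\ell\mapsto\det\Phi_\ell$ vanishes identically exactly when no $\Phi_\ell$ is invertible, so the failure of (P$1'$) means $\det\Phi\equiv\det\Psi\equiv0$: the images $\mathcal N_\Phi,\mathcal N_\Psi\subset\mathrm{Hom}(S,V)$ consist of singular maps. If (P$2'$) also fails, then $\Phi$ and $\Psi$ are injective, so $\mathcal N_\Phi,\mathcal N_\Psi$ are genuine three-dimensional nets of singular matrices. A direct computation identifies a common cokernel of $\mathcal N_\Phi$, i.e. a nonzero $\phi\in V^*$ with $\phi\circ\Phi_\ell\equiv0$, with a nonzero $\phi$ satisfying $\Psi_\phi=0$; hence the failure of (P$2'$) means $\mathcal N_\Phi$ has no common cokernel, and it never has a common kernel because no nonzero tensor is annihilated by all left contractions. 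Everything thus reduces to showing that, under these hypotheses, $S$ has the skew form (P$3'$).

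To produce (P$3'$) I would analyse the rank stratification of $\mathcal N_\Phi$. Its generic element has rank $2$; in the favourable case where \emph{every} nonzero $\Phi_\ell$ has rank exactly $2$, a constant-rank argument applies: the image $\Phi_\ell(S)$ is a plane $\ker\eta_\ell$ cut out by a well-defined $\eta_\ell\in\mathbb P(V^*)$, the assignment $\ell\mapsto\eta_\ell$ is a projective collineation (this linearity is the content of constant rank, as opposed to the quadratic cokernel map attached to a general rank-two net), and the invertible linear map inducing it, suitably dualized, is the operator $T$ with $S=\spann\{x\otimes Ty-y\otimes Tx\}$. The incompatibility of (P$3'$) with (P$1'$) and (P$2'$) goes the same way: for a skew-type $S$ one checks directly that $\det\Phi\equiv0$ and that $S$ contains no nonzero decomposable tensor, so that $\Phi$ and $\Psi$ remain injective.

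The hard part is justifying the restriction to the constant-rank situation, i.e. ruling out the degeneracies in which some nonzero $\Phi_\ell$ (or $\Psi_\ell$) has rank $\le1$. Such a map corresponds to $S$ meeting some $M_0\otimes V$ in a plane, equivalently to $S$ containing a two-dimensional family of decomposable tensors with a common factor; and—as one already sees from small examples—a singular net with no common kernel or cokernel need \emph{not} be skew precisely when such low-rank elements are present. The crux of the proof is therefore to show that each of these configurations in fact forces (P$1'$) or (P$2'$) after all, so that under the standing assumption ``not (P$1'$) and not (P$2'$)'' only the constant-rank, hence skew, net survives. I expect this elimination—carried out by a hands-on study of the decomposable tensors contained in $S$ and of the planes $\Phi_\ell(S)$ and $\Psi_\ell(S)$—to absorb the bulk of the work, the subsequent recovery of $T$ being routine.
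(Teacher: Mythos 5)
Your opening reduction is sound and in fact mirrors the paper's own first move: the paper also converts $S$ into a $3$-dimensional space of $3\times3$ matrices (via $V\otimes V\cong L(V^*,V)$) and restates the failure of (P$1'$) and (P$2'$) exactly as ``no evaluation map is invertible'' and ``no common kernel or cokernel''. The problem is that everything after that reduction is announced rather than proved, and the two deferred steps are precisely where the mathematical content of the lemma lives. (i) You must show that, for the nets $\mathcal N_\Phi,\mathcal N_\Psi$ arising from a tensor space $S$, the presence of a nonzero $\Phi_\ell$ of rank $\leq1$ forces (P$1'$) or (P$2'$). This is not a formality: as you yourself note, a $3$-dimensional space of singular $3\times3$ matrices with no common kernel and no common cokernel need not be skew (take a generic $3$-dimensional subspace of the compression space of matrices sending $\spann\{e_1,e_2\}$ into $\spann\{e_1\}$), so one genuinely has to use that \emph{both} contractions $\Phi$ and $\Psi$ of the same $S$ satisfy the hypotheses. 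You give no argument here. In the paper this is statement (3.1) ($\dim Sx=2$ for every nonzero $x$), proved by putting the three matrices into an adapted form with undetermined entries and deriving a contradiction from the identical vanishing of two determinant cubics; this is also where the infiniteness of $\K$ is actually used. (ii) In the constant-rank case, the assertion that $\ell\mapsto\eta_\ell$ is a projective collineation ``because linearity is the content of constant rank'' is not a proof: the cokernel of a rank-two $3\times3$ matrix is read off from the adjugate, which is \emph{quadratic} in $\ell$, so linearity of the induced map on lines needs an argument, as does the subsequent identification of $S$ with $\spann\{x\otimes Ty-y\otimes Tx\}$ for the resulting $T$.

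So the proposal is a correct and well-organized reduction to the right two sublemmas, but both sublemmas — which you yourself describe as ``the bulk of the work'' — are left unproved; as it stands the argument does not establish the lemma. The paper settles both by the same kind of hands-on computation: choose bases adapted to a low-rank element (resp.\ to a generic pair of elements), impose that all $3\times3$ determinants of evaluations vanish identically as polynomials, and solve for the undetermined coefficients until the antisymmetric normal form, i.e.\ (P$3'$), emerges. If you want to keep your more structural route, you would need to supply proofs of (i) and (ii) at that level of detail.
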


For two vector spaces $V_1$ and $V_2$ over $\K$, $L(V_1,V_2)$ stands for the vector space of all linear maps from $V_1$ to $V_2$. Using the natural isomorphism between $V\otimes V$ and $L(V^*,V)$ together with the fact that a two-dimensional subspace of $V$ is exactly the kernel of a non-zero linear functional, we can reformulate Lemma~\ref{split2} in the following way.

\begin{lemma}\label{split3} Let $V$ be a $3$-dimensional vector space over an infinite field $\K$ and $S$ be a $3$-dimensional subspace of $L(V^*,V)$. Then at least one of the following statements is true$:$
\begin{itemize}\itemsep=-3pt
\item[{\rm (P$1''$)}] there is $f\in V^*$ such that the map $A\mapsto Af$ or the map $A\mapsto A^*f$ from $S$ to $V$ is injective$;$
\item[{\rm (P$2''$)}] there is a non-zero $f\in V^*$ such that $Af=0$ for all $A\in S$ or $A^*f=0$ for all $A\in S;$
\item[{\rm (P$3''$)}] there is an invertible $T\in L(V,V)$ such that $g(TAf)=-f(TAg)$ for all $f,g\in V^*$ and $A\in S$.
\end{itemize}
\end{lemma}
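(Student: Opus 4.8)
The plan is to work with the formulation in Lemma~\ref{split3} and to detect the three alternatives through two determinantal cubic forms. I fix a basis $A_1,A_2,A_3$ of $S$ and a basis of $V$, so that each $A_i$ is a $3\times3$ matrix, and set
\[
P(f)=\det[A_1f\mid A_2f\mid A_3f],\qquad Q(f)=\det[A_1^*f\mid A_2^*f\mid A_3^*f]
\]
for $f\in V^*$. The map $A\mapsto Af$ from $S$ to $V$ is injective exactly when $P(f)\ne0$, and $A\mapsto A^*f$ is injective exactly when $Q(f)\ne0$; since $\K$ is infinite a nonzero cubic takes a nonzero value, so if $P\not\equiv0$ or $Q\not\equiv0$ then (P$1''$) holds. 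I would therefore assume $P\equiv Q\equiv0$ and aim to produce (P$2''$) or (P$3''$).

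Next I would reduce to a net of singular matrices. Because $A_if$ is linear in $f$, the matrix $N(f)=[A_1f\mid A_2f\mid A_3f]$ equals $\sum_k f_kC_k$ for fixed matrices $C_k$, and $P\equiv0$ says every $N(f)$ is singular. A nonzero vector $c$ with $N(f)c=0$ for all $f$ would give $\sum_i c_iA_i=0$, which is impossible; a nonzero $w$ with $w^{\top}N(f)=0$ for all $f$ would give $A_i^*w=0$ for all $i$, which is the right half of (P$2''$). Thus, once (P$2''$) is assumed to fail, $\{N(f)\}$ is a three-dimensional space of singular $3\times3$ matrices with neither a common kernel vector nor a common cokernel vector. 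A short argument with spaces of rank-one matrices shows that if the generic rank of $N(f)$ were $\le1$ then all $N(f)$ would share one column line, forcing a common cokernel; hence I may assume the generic rank of $N(f)$ is exactly $2$, and symmetrically for $Q$ the transposed net is singular with no common kernel or cokernel.

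Now I would build an alternating form. Generic rank $2$ gives, for generic $f$, a line $\K A_f\subseteq S$ with $A_ff=0$; the crucial claim is that $f\mapsto\K A_f$ extends to a linear isomorphism $V^*\to S$. Granting it, and choosing $A_f$ to depend linearly on $f$, the map $\beta(f,g)=A_fg$ is bilinear with $\beta(f,f)=A_ff=0$, hence alternating, so it factors through the canonical identification $\wedge^2V^*\cong V$ and yields a linear map $T\colon V\to V$ with $A_f=T\circ S_f$, where $S_f$ is the skew map $g\mapsto f\wedge g$ read through $\wedge^2V^*\cong V$. Consequently $T^{-1}A_f=S_f$ is skew-symmetric; as $f$ ranges over $V^*$ the maps $S_f$ fill the whole space of skew maps, and since $f\mapsto A_f$ is onto $S$ this shows $T^{-1}A$ is skew-symmetric for every $A\in S$. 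Renaming $T^{-1}$, there is an invertible map $M$ with $MA$ skew for all $A\in S$, i.e. $g(MAf)=-f(MAg)$ for all $f,g$ — exactly (P$3''$), with $M$ invertible because $\beta$ is nondegenerate. The same computation shows (P$3''$) forces $P\equiv Q\equiv0$ with the kernels sweeping all of $V^*$, so (P$3''$) excludes (P$1''$) and (P$2''$).

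The main obstacle is the linearity claim in the third step. A priori the kernel line of $N(f)$ is described by the $2\times2$ cofactors of $N(f)$, which are quadratic in $f$, whereas the skew model is linear; the heart of the matter is to show that this kernel correspondence is genuinely projective-linear — equivalently, that the cofactor map sheds a common linear factor coming from its base locus. This is precisely the classical phenomenon that singles out the skew net among all three-dimensional nets of singular $3\times3$ matrices, and I expect to establish it using both hypotheses $P\equiv0$ and $Q\equiv0$ (so that kernels and cokernels sweep nondegenerately) together with the infinitude of $\K$. Alternatively one may quote the classification of spaces of matrices of bounded rank, whose only three-dimensional exceptional case with no common kernel or cokernel is the skew net, and then transport that normal form — which lives on one flattening of the trilinear form $g(Af)$ — back to the single left-twist $M$ acting on $S$ itself.
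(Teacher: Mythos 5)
Your reduction to the two cubic forms $P$ and $Q$ is the right first move (it is equivalent to the paper's hypothesis (L2) in its Lemma~3.4, and the use of infinitude of $\K$ to pass from ``not identically zero'' to ``nonzero somewhere'' matches the paper). But the argument has a genuine gap exactly where you flag it: the claim that $f\mapsto \K A_f$ (the line of $A\in S$ with $Af=0$) extends to a \emph{linear} isomorphism $V^*\to S$ is the entire content of the lemma, and you do not prove it. A priori the kernel of $N(f)$ is cut out by the $2\times 2$ cofactors, which are quadratic in $f$; showing that this quadratic map factors through a linear one is precisely the dichotomy between the skew net and the other nets of singular matrices, and ``I expect to establish it'' is not an argument. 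Everything after that point (the alternating form $\beta$, the factorization $A_f=T\circ S_f$, the identification with (P$3''$)) is a formal consequence and is fine, but it rests on the unproven step.

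The fallback citation is also not a safe exit. In the classification of $3$-dimensional spaces of $3\times3$ matrices of rank $\le 2$, the alternatives are not only ``common kernel vector, common cokernel vector, or skew net'': there are also compression spaces of type $(2,1)$, i.e.\ $3$-dimensional subspaces of $\{N: NU\subseteq W\}$ with $\dim U=2$, $\dim W=1$, which generically have \emph{no} common kernel or cokernel vector. Ruling these out requires using the vanishing of the \emph{other} flattening ($Q\equiv 0$ together with the failure of both halves of (P$2''$)), and that is additional work you have not done. Moreover the classification results are usually proved over algebraically closed or large fields, so quoting them over an arbitrary infinite $\K$ needs care. The paper avoids all of this by an elementary two-stage computation: it first shows directly that $\dim Sx=2$ for every nonzero $x$ (which already kills the compression-type degenerations), then chooses adapted bases, writes the two determinantal identities as polynomial identities in three variables, and extracts the coefficients one by one until the three matrices are forced into skew form. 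If you want to keep your more conceptual route, the missing lemma you must actually prove is: a $3$-dimensional net $N(f)$ of singular $3\times3$ matrices whose kernels are not constant and whose transposed net is also singular with non-constant kernels has its kernel map projective-linear in $f$. As it stands, the proposal identifies the target but does not reach it.
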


In other words, we have to show that (P$3''$) holds if both (P$1''$) and (P$2''$) fail. Hence, Lemma~\ref{split3} and therefore Lemma~\ref{split} will follow if we prove the following result.

\begin{lemma}\label{split1} Let $V_1$ and $V_2$ be a $3$-dimensional vector spaces over an infinite field $\K$ and $S$ be a $3$-dimensional subspace of $L(V_1,V_2)$. Assume also that
\begin{itemize}\itemsep-2pt
\item[\rm(L1)]$\bigcap\limits_{A\in S}\ker A=\bigcap\limits_{A\in S}\ker A^*=\{0\};$
\item[\rm(L2)]$\{Au:A\in S\}\neq V_2$ for each $u\in V_1$ and $\{A^*f:A\in S\}\neq V_1^*$ for each $f\in V_2^*$.
\end{itemize}
Then there exist linear bases in $V_1$ and $V_2$ such that $S$ in the corresponding matrix form is exactly the space of $3\times 3$ antisymmetric matrices.
\end{lemma}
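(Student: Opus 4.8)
The plan is to recognise the space of antisymmetric matrices by two intrinsic features and reconstruct it from them. After fixing auxiliary bases we identify $L(V_1,V_2)$ with $3\times 3$ matrices; for the antisymmetric matrix $[w]_\times$ of the cross product with $w$ one has $\ker[w]_\times=\K w$ and $\mathrm{adj}[w]_\times=ww^T$, so the target is characterised by: (i) every nonzero element has rank $2$, and (ii) the kernel line and the cokernel line depend \emph{linearly} on the matrix. First I would restate (L1) and (L2) through the evaluation maps $\mathrm{ev}_u\colon S\to V_2$, $A\mapsto Au$, and $\mathrm{ev}^*_f\colon S\to V_1^*$, $A\mapsto A^*f$: condition (L1) says $\mathrm{ev}_u\neq 0$ for $u\neq 0$ and $\mathrm{ev}^*_f\neq 0$ for $f\neq 0$, while (L2) says $\mathrm{ev}_u$ and $\mathrm{ev}^*_f$ are never surjective. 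Since these are maps between $3$-dimensional spaces, (L1)+(L2) give $1\leq\mathrm{rank}\,\mathrm{ev}_u\leq 2$; equivalently, every line $\K u\subset V_1$ lies in $\ker A$ for some nonzero $A\in S$, and dually every hyperplane of $V_2$ contains the image of some nonzero $A$.

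Step 1 is to show that the generic (hence maximal) rank of the elements of $S$ is exactly $2$, i.e. $\det$ vanishes identically on $S$ but not every element has rank $\leq 1$. Generic rank $1$ is excluded quickly: then $\mathbb{P}(S)$ is a $2$-plane inside the Segre cone of rank-$\leq1$ matrices, which forces a common image line or a common kernel hyperplane, each contradicting (L1). The hard part, and the main obstacle of the whole proof, is to exclude an invertible element. If $A_0\in S$ is invertible I would pass to $S'=A_0^{-1}S\subset\mathrm{End}(V_1)$, which contains $\mathrm{Id}$; writing $S'=\spann\{\mathrm{Id},B_0,C_0\}$, the translated (L2) forces every $u\neq 0$ to be an eigenvector of some nonzero member of the pencil $\{bB_0+cC_0\}$. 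A dimension count then yields a contradiction: the incidence $\{([u],[b:c])\colon (bB_0+cC_0)u\in\K u\}$ projects onto $\mathbb{P}(V_1)$, so has dimension $\geq 2$, yet it fibres over $\mathbb{P}^1$ with finite (eigenvector) fibres, so has dimension $\leq 1$ — impossible over the infinite field $\K$, since a curve cannot sweep out the surface $\mathbb{P}(V_1)$. The only escape is that every member of the pencil has a $2$-dimensional eigenspace; peeling off scalars then turns the pencil into a pencil of rank-$\leq1$ matrices, which by the Kronecker alternative has a common image or a common kernel, and this residual case is killed by the dual condition in (L2) (a direct check produces $f$ with $\{A^*f\}=V_1^*$).

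With $\det\equiv 0$ on $S$ in hand, Step 2 extracts the linear structure cheaply from the adjugate. Every $A\in S$ has rank $\leq 2$, so $\mathrm{adj}(A)$ has rank $\leq 1$; regarding its entries as quadratic forms in the coordinates on $S$, all $2\times2$ minors vanish identically (again using that $\K$ is infinite), so the matrix of forms factors as $\mathrm{adj}(A)=\mu(A)\,\nu(A)^T$, with $\mu(A)$ spanning $\ker A$ and $\nu(A)$ spanning the cokernel, where $\mu,\nu$ are homogeneous with $\deg\mu+\deg\nu=2$. A constant $\mu$ would mean a common kernel and a constant $\nu$ a common cokernel, both forbidden by (L1); hence $\mu$ and $\nu$ are \emph{linear} maps $S\to V_1$ and $S\to V_2^*$. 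Because $\mu$ cannot vanish on $S\setminus\{0\}$ — otherwise $\ker A$ would range only over a proper subspace and (L2) would fail — the maps $\mu$ and likewise $\nu$ are linear isomorphisms. In particular no element of $S$ has rank $\leq 1$, so every nonzero element has rank exactly $2$, and both the kernel map and the cokernel map are projective-linear isomorphisms.

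Finally, Step 3 reads off the normal form. Put $A_u:=\mu^{-1}(u)$, a linear isomorphism $V_1\to S$ with $\ker A_u=\K u$, so that $A_uu=0$, and define $B(u,w):=A_uw\in V_2$. Then $B$ is bilinear and alternating, and $B(u,w)=0$ exactly when $u,w$ are proportional. Since every nonzero element of $\Lambda^2V_1$ is decomposable (here $\dim V_1=3$), the induced map $\Lambda^2V_1\to V_2$ is injective, hence an isomorphism. Choosing a basis $e_1,e_2,e_3$ of $V_1$ and setting $v_1=B(e_2,e_3)$, $v_2=B(e_3,e_1)$, $v_3=B(e_1,e_2)$ gives a basis of $V_2$, and a direct computation shows that in these bases $A_{e_1},A_{e_2},A_{e_3}$ are precisely $[e_1]_\times,[e_2]_\times,[e_3]_\times$. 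Thus $S=\spann\{A_{e_1},A_{e_2},A_{e_3}\}$ is exactly the space of antisymmetric $3\times3$ matrices, as required. The genuine difficulty is concentrated in Step 1 (excluding an invertible element), while Steps 2 and 3 become formal once $\det\equiv0$ is established.
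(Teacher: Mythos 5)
Your strategy is genuinely different from the paper's and, in outline, it works. The paper never discusses the rank of individual elements of $S$: its first step is to show that $\dim Sx=2$ for every nonzero $x$ (the rank of the evaluation map, not of a matrix), after which it normalises bases step by step and extracts everything from the identical vanishing of two explicit $3\times3$ determinants in $x,y,z$. Your route --- prove $\det\equiv0$ on $S$, factor the adjugate as $\mathrm{adj}(A)=\mu(A)\nu(A)^T$ with $\mu,\nu$ linear isomorphisms, then recover the normal form from the alternating map $B(u,w)=\mu^{-1}(u)w$ and the isomorphism $\Lambda^2V_1\cong V_2$ --- is more conceptual and explains \emph{why} the answer is the space of antisymmetric matrices; Step 3 in particular is clean and correct, and the exclusion of generic rank $1$ via the Segre variety is fine.

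Two places, however, are asserted rather than proved. First, in Step 2 the injectivity of $\mu$ is not justified by ``otherwise $\ker A$ would range only over a proper subspace'': if $\mu(A_0)=0$ for some $A_0\neq0$ then $\mathrm{adj}(A_0)=0$, so $A_0$ has rank $\leq1$ and a $2$-dimensional kernel which need not lie in the image of $\mu$. One has to polarise $A\mu(A)=0$ to $A\mu(B)+B\mu(A)=0$, use the dual statement for $\nu$ to see that the rank-$\leq1$ locus $\ker\mu\cup\ker\nu$ contributes at most one extra hyperplane of kernels beyond $\mathrm{im}\,\mu$, and then invoke the fact that a vector space is not a union of two proper subspaces; this is a real argument, not a parenthesis, though it does close. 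Second, in Step 1 the exclusion of an invertible element is the heart of the matter and your ``only escape'' clause hides most of the work: the escape is that the rank-$\leq1$ locus of $\mathbb{P}(S')$ contains a curve dominating the pencil, and before the Kronecker alternative applies you must reduce that curve to a line (an honest pencil of rank-$\leq1$ matrices), excluding an irreducible conic component separately (its linear span would force $\mathrm{Id}$ to have rank $\leq2$). Note also that the dimension count must be run over the algebraic closure; this is legitimate because (L1) and (L2) survive field extension ((L2) is the identical vanishing of a cubic form since $\K$ is infinite, (L1) is the injectivity of a linear map) and you only seek a contradiction there, whereas the final basis construction in Step 3 correctly stays over $\K$. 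With these two points written out in full, your proof is complete and is a genuine alternative to the paper's computation.
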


\begin{proof} First, we shall show that
\begin{equation}\label{ee1}
\text{for every non-zero $x\in V_1$, the space $Sx=\{Ax:A\in S\}$ is two-dimensional.}
\end{equation}
By (L1), $Sx\neq\{0\}$ for each $x\in V_1\setminus\{0\}$. By (L2), $Sx\neq V_2$ for each $x\in V_1$. Thus $Sx$ for $x\in V_1\setminus \{0\}$ is either one-dimensional or two-dimensional. Assume that (\ref{ee1}) fails. Then there is $x_1\in V_1$ such that $Sx_1$ is one-dimensional. Then we can pick a basis $A_1,A_2,A_3$ in $S$ such that $A_1x_1=y_1\neq 0$ and $A_2x_1=A_3x_1=0$. By (L1), the linear span of the images of all $A\in S$ is $V_2$. Hence we can pick $x_2,x_3\in V_1$ such that $x_1,x_2,x_3$ is a basis in $V_1$, while $y_1,y_2,y_3$ is a basis in $V_2$, where $y_j=A_jx_j$. With respect to the bases $x_1,x_2,x_3$ and $y_1,y_2,y_3$, the matrices of $A_1$, $A_2$ and $A_3$ have the shape
$$
\left(\begin{array}{ccc}1&*&*\\0&*&*\\0&*&*\end{array}\right),\ \
\left(\begin{array}{ccc}0&0&*\\0&1&*\\0&0&*\end{array}\right)\ \ \text{and}\ \
\left(\begin{array}{ccc}0&*&0\\0&*&0\\0&*&1\end{array}\right),\ \ \text{respectively.}
$$
Keeping the basis in $V_2$ as well as $x_1$ and replacing $x_2$ and $x_3$ by $x_2+\alpha x_1$ and $x_3+\beta x_1$ respectively with appropriate $\alpha,\beta\in \K$, we can kill the second and the third entries in the first row of the first matrix. With respect to the new basis, the matrices of $A_1$, $A_2$ and $A_3$ are
$$
\left(\begin{array}{ccc}1&0&0\\0&a_1&a_2\\0&a_3&a_4\end{array}\right),\ \
\left(\begin{array}{ccc}0&0&a_5\\0&1&a_6\\0&0&a_7\end{array}\right)\ \ \text{and}\ \
\left(\begin{array}{ccc}0&a_8&0\\0&a_9&0\\0&a_{10}&1\end{array}\right)\ \ \text{with $a_j\in\K$.}
$$
By (L2), for every $u=(x,y,z)\in\K^3$, $A_1u,A_2u,A_3u$ are linearly dependent and $A^T_1u,A^T_2u,A^T_3u$ are linearly dependent as well (here $A_j$ stand for the matrices of the linear maps $A_j$), where $T$ denotes the transpose matrix. Computing these vectors, we see that these conditions read
$$
{\rm det}\left(\begin{array}{ccc}x&a_1y+a_2z&a_3y+a_4z\\a_5z&y+a_6z&a_7z\\a_8y&a_9y&a_{10}y+z\end{array}\right)=
{\rm det}\left(\begin{array}{ccc}x&a_1y+a_3z&a_2y+a_4z\\0&y&a_5x+a_6y+a_7z\\0&a_8x+a_9y+a_{10}z&z\end{array}\right)=0
$$
for all $x,y,z\in\K$. Since $\K$ is infinite, the above two determinants must be zero as polynomials in $x,y,z$. The first determinant has the shape $x(a_{10}y^2+a_6z^2+(1+a_6a_{10}-a_7a_9)yz)+g$ with $g\in\K[y,z]$. Hence $a_{10}=a_6=0$ and $a_7a_9=1$. Taking into account that $a_{10}=a_6=0$, we see that the $xyz$-coefficient of the second determinant is $a_7a_9$. Hence $a_7a_9=0$, which contradicts $a_7a_9=1$. This contradiction proves (\ref{ee1}).

Now we pick a non-zero $u\in V_1$. By (L2), there is a non-zero $A_1\in S$ such that $A_1u=0$. Since $A_1\neq 0$,  there is $x\in V_1$ such that $A_1x\neq 0$. Since a Zarisski generic $x$ will do, we can assure the extra condition $Su\neq Sx$ (otherwise (L1) is violated). Obviously, $u$ and $x$ are linearly independent. By (\ref{ee1}), we can find $A_2\in S$ such that $A_1x$ and $A_2x$ are linearly independent. Again  suppose $A_2$ is Zarisski generic  and therefore we can achieve the extra condition that $A_1x$, $A_2x$ and $A_2u$ are linearly independent (otherwise $Su=Sx$). Now $y_1=-A_2u$, $y_2=A_1x$ and $y_3=A_2x$ form a basis in $V_2$. By (L2), there is a non-zero $A_3\in S$ such that $A_3x=0$. Clearly, $A_j$ are linearly independent (=they form a basis in $S$). Pick a basis $x_1,x_2,x_3$ in $V_1$ such that $x_1=x$ and $x_3=u$.
With respect to the bases $x_1,x_2,x_3$ and $y_1,y_2,y_3$, the matrices of $A_1$, $A_2$ and $A_3$ have the form
$$
\left(\begin{array}{ccc}0&*&0\\1&*&0\\0&*&0\end{array}\right),\ \
\left(\begin{array}{ccc}0&*&-1\\0&*&0\\1&*&0\end{array}\right)\ \ \text{and}\ \
\left(\begin{array}{ccc}0&*&*\\0&*&*\\0&*&*\end{array}\right),\ \ \text{respectively.}
$$
Keeping the basis in $V_2$ as well as $x_1$ and $x_3$ and replacing $x_2$ by $x_2+\alpha x_1+\beta x_3$ with appropriate $\alpha,\beta\in \K$, we can kill the middle entry in the first matrix and the second entry of the first row of the second matrix. With respect to the new basis, the matrices of $A_1$, $A_2$ and $A_3$ have shape:
$$
\left(\begin{array}{ccc}0&a_1&0\\1&0&0\\0&a_2&0\end{array}\right),\ \
\left(\begin{array}{ccc}0&0&-1\\0&a_4&0\\1&a_5&0\end{array}\right)\ \ \text{and}\ \
\left(\begin{array}{ccc}0&a_6&a_9\\0&a_7&a_{10}\\0&a_8&a_{11}\end{array}\right)\ \ \text{with $a_j\in\K$.}
$$
By (L2), for every vector $u=(x,y,z)\in\K^3$, $A_1u,A_2u,A_3u$ are linearly dependent and $A^T_1u,A^T_2u,A^T_3u$ are linearly dependent. Computing these vectors, we see that these conditions mean:
$$
{\rm det}\left(\begin{array}{ccc}a_1y&x&a_2y\\-z&a_4y&x+a_5y\\a_6y+a_9z&a_7y+a_{10}z&a_8y+a_{11}z\end{array}\!\!\right)=
{\rm det} \left(\begin{array}{ccc}y&a_1x+a_2z&0\\-z&a_4y+a_5z&x\\0&a_6x+a_7y+a_{8}z&a_9x+a_{10}y+a_{11}z\end{array}\!\!\right)=0
$$
for all $x,y,z\in\K$. Since $\K$ is infinite, these two determinants are zero as polynomials in $x,y,z$. The terms containing $x^2$ in the first determinant amount to $x^2(a_6y+a_9z)$, which implies $a_6=a_9=0$. The $yz^2$-coefficient of the same polynomial is $-a_2a_{10}$. Hence $a_2a_{10}=0$. First, we show that $a_2=0$. Indeed, assume the contrary. Then $a_2\neq 0$ and therefore $a_{10}=0$. Now $z^3$-coefficient in the second determinant is $-a_2a_{11}$. Hence $a_{11}=0$. Taking into account that $a_6=a_9=a_{10}=a_{11}=0$, we see that in the first determinant the terms containing $z$ amount to $z(a_8xy-a_2a_7y^2)$. It follows that $a_7=a_8=0$ and therefore $A_3=0$, which is a contradiction. Hence $a_2=0$. Recall that we already know that $a_6=a_9=0$. Next, we show that $a_1\neq 0$. Indeed, assume the contrary: $a_1=0$. Then the first determinant simplifies to $zx(a_8y+a_{11}z)$. Hence $a_8=a_{11}=0$. Now the second determinant simplifies to $a_4a_{10}y^3+a_5a_{10}y^2z-a_7xy^2$. Hence $a_7=0$ and $a_4a_{10}=a_5a_{10}=0$. If $a_{10}=0$, we have $A_3=0$, which is a contradiction. If $a_{10}\neq 0$, we have $a_4=a_5=0$. In this case the second column of each $A_j$ is zero. Thus the second basic vector in $V_1$ is in the common kernel of all elements of $S$, which contradicts (L1). These contradictions prove that $a_1\neq 0$. By normalizing the second basic vector in $V_1$ appropriately, we can assume that $a_1=-1$. Taking this into account together with $a_2=a_6=a_9=0$, we see that the $xy^2$ and $xz^2$ coefficients in the first determinant are $-a_7$ and $a_{11}$ respectively. Hence $a_7=a_{11}=0$. Now the determinants in the above display simplify to
$$
-a_4a_8y^3+(a_8+a_{10})xyz+a_5a_{10}y^2z\ \ \text{and}\ \  a_4a_{10}y^3+a_5a_{10}y^2z-(a_8+a_{10})xyz.
$$
Since they vanish, $a_{10}=-a_8$. If $a_8=0$, then $a_{10}=0$ and $A_3=0$, which is a contradiction. Thus $a_8\neq 0$.
Now vanishing of the polynomials in the above display implies $a_4=a_5=0$. Hence, the matrices $A_1$, $A_2$ and $A_3$ acquire the shape
$$
\left(\begin{array}{ccc}0&-1&0\\1&0&0\\0&0&0\end{array}\right),\ \
\left(\begin{array}{ccc}0&0&-1\\0&0&0\\1&0&0\end{array}\right)\ \ \text{and}\ \
a_8\left(\begin{array}{ccc}0&0&0\\0&0&-1\\0&1&0\end{array}\right)\ \ \text{with $a_8\neq 0$}
$$
and $S$ becomes the space of all antisymmetric matrices.
\end{proof}

Since Lemma~\ref{split1} is equivalent to Lemma~\ref{split}, the proof of Lemma~\ref{split} is now complete.

\section{Specific algebras satisfying $H_A=H_j$}

For each $j\in\{1,\dots,11\}$, we provide a quadratic algebra $A_j$ (generated by degree $1$ elements $x$, $y$ and $z$) satisfying $H_{A_j}=H_j$. In each case the last equality is an easy exercise since we supply the finite Gr\"obner basis in the ideal of relations and describe the normal words. These 11 examples are presented in Table~1.

\begin{table}[ht]
\caption{Algebras $A_j$ for $1\leq j\leq 11$\huhu}
\centering
\begin{tabular}{|l|l|l|l|l|}
\hline
\smash{\lower6pt\hbox{$j$}}&\smash{\lower6pt\hbox{defining relations of $A_j$}}&other elements of&normal words&Hilbert\\
&&the Gr\"obner basis&of degrees $\geq 3$&series\\
\hline
 1&$xx{-}zx$, $xy{-}zz$, $xz$, $yx$, $yy{-}zy$, $yz$&$zzx$, $zzy$, $zzz$&none&$H_1$\\
\hline
2&$xx$,  $yx$, $yy$, $zx$, $zy$, $zz$&none&$xyz$&$H_2$\\
\hline
3&$xx{-}zx$, $xy$, $xz$, $yx$, $yy{-}zy$, $yz$&$zzx$, $zzy$&$z^n$ for $n\geq 3$&$H_3$\\
\hline
4&$xx$, $xy$, $xz{-}zz$, $yx$, $yy$, $yz{-}zz$&$zzz$&$zzx$, $zzy$&$H_4$\\
\hline
5&$xx{-}yx$, $xy$, $yy$, $yz$, $zx$, $zz$&$zyx$&$xzy$, $yxz$, $yxzy$&$H_5$\\
\hline
6&$xz{-}yz$, $xy$, $yx$, $yy$, $zx$, $zz$&$zyz$&$yzy$, $x^n$ for $n\geq 3$&$H_6$\\
\hline
7&$xx$, $xy$, $xz$, $yy$, $zx$, $zy$&none&$z^{n}$, $yz^{n-1}$ for $n\geq 3$&$H_7$\\
\hline
8&$xy$, $xz$, $yx$, $yz$, $zx$, $zy$&none&$x^{n}$, $y^{n}$, $z^{n}$ for $n\geq 3$&$H_8$\\
\hline
9&$xx$, $xz$, $yx$, $zx$, $zy$, $zz$&none&$xy^{n-1}$, $xy^{n-2}z$, $y^n$,&$H_9$\\
&&&$y^{n-1}z$ for $n\geq 3$&\\
\hline
10&$xx$, $xy$, $xz$, $yx$, $yz$, $zx$&none&$z^my^{n-m}$ for $n\geq 3$,&$H_{10}$\\
&&&$0\leq m\leq n$&\\
\hline
11&$xx$, $xy$, $xz$, $yx$, $yy$, $zx$&none&all monomials in $y$, $z$&$H_{11}$\\
&&&without $yy$ as a subword&\\
\hline
\end{tabular}
\label{table1}
\end{table}

Counting normal words is trivial for all $A_j$ except the last one, where the normal words of degree $n\geq 3$ are exactly monomials in $z$ and $y$, which do not contain $yy$ as a submonomial. In this case, as it was already observed in the proof of Lemma~\ref{II}, the numbers $a_n$ of such monomials of degree $n$ are consecutive Fibonacci  numbers with $a_3=5$, yielding  $H_{A_{11}}=H_{11}$.

\begin{proof}[Proof of Theorem~$\ref{main001}$] By Proposition~\ref{adse}, $H_A\in \{H_1,\dots,H_{11}\}$ for every quadratic $\K$-algebra $A$ satisfying $\dim A_1=\dim A_2=3$. The examples in Table~1 show that each $H_j$ with $1\leq j\leq11$ is indeed the Hilbert series of a quadratic $\K$-algebra.
\end{proof}

It remains to deal with Koszulity. We need the following elementary observation.

\begin{lemma}\label{now} Assume that $A$ is a degree graded algebra on generators $x_1,\dots,x_n$, that the monomials in $x_j$ carry a well-ordering compatible with the multiplication and that $\Lambda$ is the set of the leading monomials of all members of the corresponding Gr\"obner basis of the ideal of relations of $A$. Let also $1\leq j,k\leq n$ be such that $x_j\neq 0$, $x_k\neq 0$ and $x_jx_k=0$ in $A$. Finally, assume that $\Lambda$ contains no monomial ending with $x_sx_k$ for $s\neq j$. Then for $u\in A$, $ux_k=0\iff u=vx_j$ for some $v\in A$.
\end{lemma}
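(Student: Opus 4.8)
The plan is to dispatch the easy implication by direct computation and then concentrate on the forward direction, where the combinatorics of the Gr\"obner basis does the work. For the easy direction, if $u=vx_j$ then $ux_k=v(x_jx_k)=v\cdot 0=0$, since $x_jx_k=0$ in $A$. Before attacking the converse I would record one preliminary fact: because $x_jx_k=0$ in $A$, the word $x_jx_k$ is not normal, so one of its submonomials $x_j$, $x_k$, $x_jx_k$ lies in $\Lambda$; as $x_j\neq 0$ and $x_k\neq 0$ exclude $x_j\in\Lambda$ and $x_k\in\Lambda$, we conclude $x_jx_k\in\Lambda$. I will also use throughout that the normal words form a $\K$-basis of $A$, and that the product in $A$ of a normal word $w$ with $x_k$ equals $wx_k$ whenever the latter is itself normal.

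For the converse, expand $u=\sum_i c_i w_i$ in the normal-word basis (distinct $w_i$, all $c_i\neq 0$) and split the sum as $u=u_1+u_2$, where $u_1$ collects the $w_i$ ending in the letter $x_j$ and $u_2$ collects the remaining ones (with the empty word, if present, placed in $u_2$). Writing $w_i=v_i x_j$ for the terms of $u_1$, each such term gives $w_i x_k=v_i(x_jx_k)=0$ in $A$, so $u_1x_k=0$ and hence $ux_k=u_2x_k$. The crux is to show that $u_2x_k$ is already expressed in the normal-word basis, i.e. that for every $w_i$ occurring in $u_2$ the word $w_ix_k$ is normal. Indeed, any submonomial of $w_ix_k$ not already contained in the normal word $w_i$ must be a suffix ending in $x_k$; a length-one such suffix would be $x_k\in\Lambda$, which is excluded, while a longer one would end in $x_sx_k$ with $x_s$ the last letter of $w_i$ (for the empty word $w_ix_k=x_k$ is normal outright). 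Since $w_i$ does not end in $x_j$ we have $s\neq j$, and the standing hypothesis that $\Lambda$ contains no monomial ending in $x_sx_k$ for $s\neq j$ forbids such a suffix. Hence $w_ix_k$ is normal.

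This normality claim is the main obstacle, and it is precisely where the two hypotheses on $\Lambda$ (no $x_k$ alone, no tail $x_sx_k$ with $s\neq j$) are consumed. Once it is in hand the conclusion is immediate: the words $w_ix_k$ occurring in $u_2x_k$ are pairwise distinct normal words (cancel the final $x_k$), so $ux_k=u_2x_k=\sum_i c_i(w_ix_k)=0$ forces every coefficient in $u_2$ to vanish, whence $u_2=0$. Therefore $u=u_1=\sum_i c_i v_i x_j=\bigl(\sum_i c_i v_i\bigr)x_j=vx_j$ with $v=\sum_i c_i v_i\in A$, as required. The only remaining points are bookkeeping: that cancelling the last letter keeps the $w_ix_k$ distinct, and that $w_i=v_ix_j$ coincides with the product $v_i\cdot x_j$ in $A$ (true because $w_i$ is normal), so that the reassembled sum genuinely equals $vx_j$.
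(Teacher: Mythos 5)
Your proposal is correct and follows essentially the same route as the paper: split $u$ in the normal-word basis into the terms ending in $x_j$ and the rest, kill the first group using $x_jx_k=0$, and show the remaining terms produce pairwise distinct normal words $w_ix_k$ (using exactly the two hypotheses on $\Lambda$), so linear independence forces them to vanish. The only addition is your preliminary observation that $x_jx_k\in\Lambda$, which is true but not needed; everything else matches the paper's argument.
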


\begin{proof} Since $x_jx_k=0$ in $A$, $ux_k=vx_jx_k=0$ if $u=vx_j$ for some $v\in A$.

Assume now that $u\in A$ and $ux_k=0$. It remains to show that $u=vx_j$ for some $v\in A$. Let ${\cal N}$ be the set of all normal words for $A$. That is, ${\cal N}$ is the set of monomials containing no member of $\Lambda$ as a submonomial. Since ${\cal N}$ is a linear basis in $A$, we can write $u$ as a linear combination of elements of ${\cal N}$. We also separate those words in this combination ending with $x_j$ from the rest of them:
$u=\sum\limits_\alpha c_\alpha w_\alpha x_j+\sum\limits_\beta d_\beta v_\beta$, where both sums are finite (an empty sum is supposed to be zero), $w_\alpha x_j$, $v_\beta$ are pairwise distinct normal words, none of $v_\beta$ ends with $x_j$ and $c_\alpha,d_\beta\in\K^*$. Then
$$\textstyle
0=ux_k=\sum\limits_\alpha c_\alpha w_\alpha x_jx_k+\sum\limits_\beta d_\beta v_\beta x_k=\sum\limits_\beta d_\beta v_\beta x_k\ \ \text{in $A$},
$$
where the last equality is due to $x_jx_k=0$. Since $v_\beta$ does not end with $x_j$ and $\Lambda$ contains neither $x_j$ nor $x_k$ nor any monomial ending with $x_sx_k$ with $s\neq j$, we easily see that each $v_\beta x_k$ is a normal word. Since the set of normal words is linearly independent in $A$, the above display implies that the sum $\sum\limits_\beta d_\beta v_\beta$ is empty and therefore $u=\sum\limits_\alpha c_\alpha w_\alpha x_j=vx_j$ with $v=\sum\limits_\alpha c_\alpha w_\alpha$.
\end{proof}

\begin{proposition} \label{kokoko} The algebras $A_j$ with $j\in\{2,5,6,7,8,9,10,11\}$ are Koszul.
\end{proposition}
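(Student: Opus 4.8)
The plan is to separate the eight indices into two groups. For $j\in\{2,7,8,9,10,11\}$ the Koszulity is essentially free of charge: Table~\ref{table1} exhibits, in each of these six cases, a quadratic algebra whose defining relations are monomials that by themselves already form the reduced Gr\"obner basis of the ideal of relations (the column listing the other elements of the Gr\"obner basis is empty). Thus each of these $A_j$ is a PBW-algebra, and by the implications recorded in \eqref{stm2} every PBW-algebra is Koszul. This settles six of the eight cases immediately and isolates the two genuinely difficult ones, $A_5$ and $A_6$. These cannot be handled the same way, since $H_5$ and $H_6$ are absent from the list $\{H_2,H_7,H_8,H_9,H_{10},H_{11}\}$ of PBW series, so no presentation of $A_5$ or $A_6$ is PBW and a direct homological argument is required.

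For $A_5$ and $A_6$ I would use the self-duality of Koszulity from \eqref{stm2} ($A$ is Koszul if and only if $A^!$ is Koszul) and construct a linear free resolution of the trivial module $\K$ over the dual algebra $A_j^!$ rather than over $A_j$ itself. The reason for passing to the dual is quantitative: in a minimal linear resolution of $\K$ over a quadratic algebra $B$ the $i$-th free module has rank $\dim B^!_i$, so one should resolve over the algebra whose dual has the smaller Hilbert series. As $(A_j^!)^!=A_j$, resolving over $A_5^!$ gives a resolution with ranks $\dim(A_5)_i$, i.e. the coefficients $1,3,3,2,1$ of $H_5$; this resolution is finite, of length $4$. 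Resolving over $A_6^!$ gives ranks $\dim(A_6)_i$, i.e. the coefficients $1,3,3,2,1,1,1,\dots$ of $H_6$, whose tail is eventually of rank one. In each case I would write the differentials explicitly as matrices of degree-one elements, making linearity manifest, and exhibiting such a resolution proves that $A_j^!$, hence $A_j$, is Koszul.

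The exactness of these explicit complexes is exactly what Lemma~\ref{now} is designed to certify. Wherever a differential is, after a change of free basis, right multiplication by a single generator $x_k$, Lemma~\ref{now} identifies its kernel as the principal module $Ax_j$ attached to a generator $x_j$ with $x_jx_k=0$, once one checks its hypothesis that no leading monomial of the Gr\"obner basis ends in $x_sx_k$ with $s\neq j$; this exhibits the kernel as generated in one higher degree, which is linearity at that spot. For $A_6^!$ the relation $xx=0$ makes the whole rank-one tail equal to right multiplication by $x$, and Lemma~\ref{now} turns it into the exact periodic complex $\cdots\xrightarrow{\,\cdot x\,}A_6^!\xrightarrow{\,\cdot x\,}A_6^!\to\cdots$, handling infinitely many degrees at once. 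For $A_5^!$ the relations $xz=0$ and $zy=0$ give $\ker(\cdot z)=A_5^!x$ and $\ker(\cdot y)=A_5^!z$ in the same way, and these feed the exactness of the four maps of the finite resolution.

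The step I expect to be the main obstacle is the transitional low-degree part of each resolution, where the differentials are honest matrices rather than multiplication by a single generator, so Lemma~\ref{now} does not apply verbatim; there the first few syzygy modules must be computed directly and checked to be generated in the predicted degrees, matching the ranks $1,3,3,2,1$ (and, for $A_6^!$, the onset of the rank-one tail). For $A_6$ there is the extra point that the resolution is infinite, so before Lemma~\ref{now} can close the argument uniformly one must verify that the explicit complex really does settle into the $x$-tail; this is where the simple high-degree structure of $A_6$ is used, namely that $A_6$ is spanned by $x^n$ in every degree $\geq 4$ while $yzy$ is a degree-three dead end. Once the finitely many non-tail syzygies are verified and the tail is dispatched by Lemma~\ref{now}, both $A_5^!$ and $A_6^!$ are Koszul, and therefore so are $A_5$ and $A_6$.
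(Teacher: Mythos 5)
Your proposal is correct and follows essentially the same route as the paper: the six monomial algebras are PBW hence Koszul, and for $A_5$, $A_6$ one passes to the Koszul duals and exhibits explicit linear free resolutions of $\K$ with ranks $1,3,3,2,1$ and $1,3,3,2,1,1,1,\dots$, certifying exactness of the single-generator differentials via Lemma~\ref{now}. The paper handles the $\cdot(x+y)$ maps (your ``transitional'' obstacle) by the linear substitution $y\mapsto x+y$, which converts them into single-generator multiplications so that Lemma~\ref{now} and normality of words apply there too.
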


\begin{proof} For $j\in\{2,7,8,9,10,11\}$, $A_j$ is a monomial algebra, hence  it is 
 Koszul. It remains to verify Koszulity of $A_5$ and $A_6$. Consider the algebra $B$ given by the generators $x,y,z$ and the relations $xx+yx$, $xz$, $zy$ and the algebra $C$ given by the generators $x,y,z$ and the relations $xx$, $xz+yz$, $zy$.
A direct computation shows that $B^!=A_5$ and $C^!=A_6$. Hence Koszulity of $A_5$ and $A_6$ is equivalent to Koszulity of $B$ and $C$ respectively. It remains to prove that $B$ and $C$ are Koszul, which is our objective now.

Consider the following sequences of free graded left $B$-modules and $C$-modules:
\begin{align}\label{koco10}
&\qquad0\to B\mathop{\longrightarrow}^{d_4}B^2\mathop{\longrightarrow}^{d_3}B^3\mathop{\longrightarrow}^{d_2}B^3
\mathop{\longrightarrow}^{d_1}B\mathop{\longrightarrow}^{d_0}\K\to 0,
\\
\label{koco1}
&\dots\mathop{\longrightarrow}^{\delta_5}C\mathop{\longrightarrow}^{\delta_5}C\mathop{\longrightarrow}^{\delta_5}C
\mathop{\longrightarrow}^{\delta_4}C^2\mathop{\longrightarrow}^{\delta_3}C^3\mathop{\longrightarrow}^{\delta_2}C^3
\mathop{\longrightarrow}^{\delta_1}C\mathop{\longrightarrow}^{\delta_0}\K\to 0,
\end{align}
where $d_0$ and $\delta_0$ are the augmentation maps,
\begin{align*}
&\text{$d_1(u,v,w)=ux+vy+wz$, $d_2(u,v,w)=(u(x+y),vz,wx)$, $d_3(u,v)=(0,ux,v(x+y))$,}
\\
&\text{$d_4(u)=(u(x+y),0)$, $\delta_1(u,v,w)=ux+vy+wz$, $\delta_2(u,v,w)=(ux,vz,w(x+y))$,}
\\
&\text{$\delta_3(u,v)=(ux,v(x+y),0)$, $\delta_4(u)=(ux,0)$ and $\delta_5(u)=ux$.}
\end{align*}
Using the relations of $B$ and $C$, one easily sees that the composition of any two consecutive arrows in both sequences is indeed zero. By definition of Koszulity, the proof will be complete if we show that these sequences are exact. The exactness of (\ref{koco10}) and (\ref{koco1}) boils down to verifying the following statements:
\begin{align}\label{exac01}
&\text{for $u\in B$, $u(x+y)=0\iff u=0$,}
\\
\label{exac02}
&\text{for $u\in B$, $ux=0\iff u=v(x+y)$ for some $v\in B$,}
\\
\label{exac03}
&\text{for $u\in B$, $uz=0\iff u=vx$ for some $v\in B$,}
\\
\label{exac1}
&\text{for $u\in C$, $ux=0\iff u=vx$ for some $v\in C$,}
\\
\label{exac2}
&\text{for $u\in C$, $u(x+y)=0\iff u=0$,}
\\
\label{exac3}
&\text{for $u\in C$, $uz=0\iff u=v(x+y)$ for some $v\in C$.}
\end{align}
Indeed, the exactness of (\ref{koco10}) at the leftmost $B$ is equivalent to (\ref{exac01}), its exactness at $B^2$ is equivalent to (\ref{exac01}) and (\ref{exac02}) and its exactness at the leftmost $B^3$ is equivalent to (\ref{exac01}), (\ref{exac02}) and (\ref{exac03}). The exactness of (\ref{koco1}) at each $C$ which is to the left of $C^2$ is equivalent to (\ref{exac1}), its exactness at $C^2$ is equivalent to (\ref{exac1}) and (\ref{exac2}), while its exactness at the leftmost $C^3$ is equivalent to (\ref{exac1}), (\ref{exac2}) and (\ref{exac3}). Checking the exactness of both complexes at three terms on the right is a straightforward exercise. Alternatively, one can notice that (\ref{koco10}) and (\ref{koco1}) are the Koszul complexes of $B$ and $C$ respectively, and that the Koszul complex happens to be exact at three right terms for every quadratic algebra, see \cite{popo} (exactness of the Koszul complex at the two right terms holds for every graded algebra generated in degree $1$, while the exactness at the term third from the right holds for all quadratic algebras).

Thus it remains to prove (\ref{exac01} - \ref{exac3}). Observe that the defining relations $xx+yx$, $xz$, $zy$ of $B$ together with $xy^kx+y^{k+1}x, \, k\geq 1$ form the Gr\"obner basis of the ideal of relations of $B$, while the defining relations $xx$, $xz+yz$, $zy$ of $C$ together with $xyz$ form the Gr\"obner basis of the ideal of relations of $C$. Now a direct application of Lemma~\ref{now} justifies (\ref{exac03}) and (\ref{exac1}). In order to prove the rest, we perform the following linear substitution. Keeping $x$ and $z$ as they were, we set the new $y$ to be $x+y$ in the old variables. This substitution provides an isomorphism of $B$ and the algebra $A$ given by the generators $x$, $y$, $z$ and the relations $xz$, $yx$, $zx-zy$. These relations together with $zyz$ form the Gr\"obner basis of the ideal of relations of $A$. The same substitution provides an isomorphism of $C$ and the algebra $D$ given by the generators $x$, $y$, $z$ and the relations $xx$, $yz$, $zx-zy$. These relations together with $zyx$ form the Gr\"obner basis of the ideal of relations of $D$.

Now we can rewrite (\ref{exac01}), (\ref{exac02}), (\ref{exac2}) and (\ref{exac3}) in terms of multiplication in $A$ and $D$. Namely, they are equivalent to
\begin{align}\label{exac010}
&\text{for $u\in A$, $uy=0\iff u=0$},
\\
\label{exac020}
&\text{for $u\in A$, $ux=0\iff u=vy$ for some $v\in A$,}
\\
\label{exac21}
&\text{for $u\in D$, $uy=0\iff u=0$,}
\\
\label{exac31}
&\text{for $u\in D$, $uz=0\iff u=vy$ for some $v\in D$,}
\end{align}
respectively. Again, Lemma~\ref{now} justifies (\ref{exac020}) and (\ref{exac31}). Next, one easily sees that the sets of normal words for both $A$ and $D$ are closed under the multiplication by $y$ on the right. This implies (\ref{exac010}) and (\ref{exac21}). Hence (\ref{exac01} - \ref{exac3}) hold and therefore $B$ and $C$ are Koszul.
\end{proof}

\begin{proposition}\label{noko} Let $A$ be a quadratic algebra such that $H_A\in\{H_1,H_3,H_4\}$. Then $A$ is non-Koszul.
\end{proposition}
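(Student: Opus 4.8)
The plan is to exploit the numerical consequence of Koszulity recorded in \eqref{stm2}: if $A$ is Koszul then $H_A(-t)H_{A^!}(t)=1$, so $H_{A^!}(t)=1/H_A(-t)$, and since $H_{A^!}$ is the Hilbert series of a genuine graded algebra, every coefficient of the power series $1/H_A(-t)$ must be a nonnegative integer. Whenever $1/H_A(-t)$ has a negative coefficient this rules out Koszulity for \emph{every} $A$ with that Hilbert series simultaneously, which disposes of $H_1$ and $H_3$. Expanding $1/H_1(-t)=1/(1-3t+3t^2)$ by the recurrence $c_n=3c_{n-1}-3c_{n-2}$ gives $1,3,6,9,9,0,-27,\dots$, so $[t^6]\,1/H_1(-t)=-27<0$; expanding $1/H_3(-t)=(1+t)/(1-2t+2t^3)$ gives $1,3,6,10,14,16,12,-4,\dots$, so $[t^7]\,1/H_3(-t)=-4<0$. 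Hence no quadratic algebra with $H_A\in\{H_1,H_3\}$ is Koszul.

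The case $H_4$ is the crux, since $1/H_4(-t)=\frac{1}{(1-2t)(1-t+t^2)}=1+3t+6t^2+11t^3+21t^4+\cdots$ has only nonnegative coefficients, so the numerical test cannot decide it. Here I would argue structurally. By Lemma~\ref{split} every such $A$ satisfies (P1), (P2) or (P3); (P3) gives $H_A=H_2$ by Lemma~\ref{III}, and (P2) without (P1) excludes $H_4$ by Lemma~\ref{II}, so $A$ must satisfy (P1). Inspecting the proof of Lemma~\ref{I}, the series $H_4$ arises exactly in Case~2 with the unique cubic Gr\"obner element equal to $x_3^3$; thus, in the basis used there, $A$ is presented by $r_{j,k}=x_jx_k-x_3u_{j,k}$ ($1\leq j\leq2$, $1\leq k\leq3$) with $x_3^3=0$ while $x_3^2x_1,x_3^2x_2$ form a basis of the $2$-dimensional $A_3$ and $A_4=0$. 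Using the standard intersection description $\dim(A^!)_n=\dim\bigcap_i V^{\otimes i}\otimes R\otimes V^{\otimes(n-2-i)}$ of the Koszul dual, the constraint $\dim A_3=2$ forces $\dim(A^!)_3=\dim(RV\cap VR)=36-\dim(RV+VR)=36-25=11=[t^3]\,1/H_4(-t)$, so any Koszul discrepancy can first appear only in degree $4$.

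It therefore suffices to prove $\dim(A^!)_4=\dim(RVV\cap VRV\cap VVR)\neq 21$ for every such $A$, since Koszulity would force $H_{A^!}(t)=1/H_4(-t)$ and hence $\dim(A^!)_4=21$. I would pass to $A^!=A(V,R^\perp)$, where from the (P1) presentation $R^\perp$ is the $3$-dimensional space spanned by $x_3x_1,x_3x_2,x_3x_3$ corrected by $R$-dependent quadratic terms supported on the monomials $x_jx_k$ with $j\leq 2$, and then compute the degree-$4$ part of its ideal by a Gr\"obner basis argument. For the representative $A_4$ of Table~\ref{table1} this yields $H_{A^!}=\frac{1+t-t^3}{1-2t}$, whose $t^4$-coefficient is $22$, so $\dim(A^!)_4=22\neq21$ and $A_4$ is non-Koszul; equivalently, the Euler characteristic of the minimal resolution gives $\dim(A^!)_4=21+\dim\mathrm{Ext}^{3,4}_A(\K,\K)$, and the excess $1$ records the single non-quadratic degree-$4$ syzygy forced by $x_3^3=0$.

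The main obstacle is upgrading this from the single algebra $A_4$ to the whole parametrized family of (P1)-presentations with $H_A=H_4$, since the $u_{j,k}$ can alter the leading terms of $R^\perp$ and hence the shape of its Gr\"obner basis. I would handle this by using the residual freedom—linear changes of $x_1,x_2$ fixing $x_3$, together with the opposite-algebra symmetry—to reduce $R$ (equivalently $R^\perp$) to a short list of normal forms, and then verify $\dim(A^!)_4\geq 22$ in each. I expect this inequality to be the robust consequence of the single relation $x_3^3=0$, which should destroy degree-$4$ distributivity of the triple $RVV,VRV,VVR$ regardless of the remaining parameters, thereby completing the proof that every $A$ with $H_A\in\{H_1,H_3,H_4\}$ is non-Koszul.
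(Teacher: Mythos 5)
Your treatment of $H_1$ and $H_3$ is exactly the paper's argument and is correct: the coefficients $-27$ of $t^6$ in $1/H_1(-t)$ and $-4$ of $t^7$ in $1/H_3(-t)$ kill Koszulity outright. The reduction of the $H_4$ case to condition (P1) of Lemma~\ref{split} via Lemmas~\ref{II} and~\ref{III} also matches the paper, as does the observation that $\dim A^!_3=36-\dim(RV+VR)=11$ is automatic, so the first possible numerical discrepancy is in degree $4$.

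The gap is in the heart of the $H_4$ case: you reduce everything to the claim that $\dim A^!_4\neq 21$ for \emph{every} quadratic $A$ with $H_A=H_4$, verify it only for the single monomial-type representative of Table~\ref{table1}, and then state that you ``expect'' the inequality $\dim A^!_4\geq 22$ to survive for the whole parametrized family. That expectation is precisely what is not established, and it is not a routine verification: the relevant algebras form a genuine family (the six parameters $u_{j,k}\in V$ in the presentation $x_jx_k-x_3u_{j,k}$, constrained only by $\dim A_3=2$, $A_4=0$), and a normal-form reduction of this family is itself a substantial piece of work that you do not carry out. Moreover, the paper's own proof indicates that a degree-$4$ argument is at least not obviously sufficient: arguing on the hypothetical Gr\"obner basis of the ideal of relations of $A^!$ (whose three quadratic relations have leading monomials $xx,xy,xz$), one finds a sub-case --- degree-$3$ element with leading monomial $u_1u_2x$ followed by a degree-$4$ element with leading monomial $w_1w_2w_3x$ --- in which $\dim A^!_4=21$ and $\dim A^!_5=42$ are both consistent with $1/H_4(-t)$, and the contradiction only materializes at degree $6$, where one gets $\dim A^!_6\leq 84<85$. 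So either you must additionally prove that this sub-case is never realized by an actual algebra (which you do not), or you must push the counting to degrees $5$ and $6$ as the paper does. As written, the $H_4$ case is a plan with its decisive step missing rather than a proof.
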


\begin{proof} Assume the contrary. Then $A$ is Koszul and by (\ref{stm2}), $H_{A^!}(t)=\frac{1}{H_A(-t)}$. In particular, all coefficients of the series $\frac{1}{H_A(-t)}$ must be non-negative. On the other hand,
$$\textstyle
\frac1{H_1(-t)}=1+3t+6t^3+9t^3+9t^4-27t^6+{\cdots}\ \ \text{and}\ \
\frac1{H_3(-t)}=1+3t+6t^3+10t^3+14t^4+16t^5+12t^6-4t^7+{\cdots}
$$
Hence $H_1$ and $H_3$ can not be Hilbert series of a Koszul algebra.

It remains to consider the case $H_A=H_4$. Since replacing the ground field by a field extension does not effect the Hilbert series or Koszulity, we can without loss of generality assume that $\K$ is algebraically closed. By Lemmas~\ref{split}, \ref{II} and \ref{III}, $A=A(V,R)$ with $R$ satisfying condition (P1) of Lemma~\ref{split}. Thus, by passing to the algebra with the opposite multiplication, if necessary, we can assume that $R\oplus(L\otimes V)=V\otimes V$ for a 1-dimensional subspace $L$ of $V$. Now choose a basis $x$, $y$, $z$ in $V$ such that $x$ spans $L$. Then $R^\perp \oplus(M\otimes V)=V\otimes V$, where $M=\spann\{y,z\}$. It follows that we can choose a basis $f$, $g$, $h$ in $R^\perp$ such that the leading monomials of $f$, $g$ and $h$ are $xx$, $xy$ and $xz$ respectively. Since $H_A=H_4=1+3t+3t^2+2t^3$, a direct computation shows that
$$\textstyle
H_{A^!}=\frac1{H_A(-t)}=1+3t+6t^2+11t^3+21t^4+42t^5+85t^6+{\cdots}
$$
(we need few first coefficients). Since $\dim A_3^!=11$, there should be exactly one degree $3$ element $q$ of the Gr\"obner basis of the ideal of relations of $A^!$. The leading monomial $\overline{q}$ of $q$ can have either the shape $u_1u_2x$ or the shape $u_1u_2u_3$, where $u_j\in\{y,z\}$. First, assume that $\overline{q}=u_1u_2u_3$. Then $A^!_4$ is spanned by $v_1v_2v_3x$ with $v_j\in\{y,z\}$ and $v_1v_2v_3\neq u_1u_2u_3$ and by $v_1v_2v_3v_4$ with $v_j\in\{y,z\}$ and $v_1v_2v_3\neq u_1u_2u_3$, $v_2v_3v_4\neq u_1u_2u_3$. The number of these monomials is $20$ if $u_1=u_2=u_3$ and is $19$ otherwise. Thus $\dim A^!_4\leq 20$. Since by the above display $\dim A^!_4=21$, we have arrived to a contradiction, which proves that $\overline{q}$ can not be of the shape $u_1u_2u_3$.

Hence $\overline{q}=u_1u_2x$ with $u_1,u_2\in\{y,z\}$. In this case, were the relations $f$, $g$, $h$ together with $q$ is the Gr\"obner basis, the dimension of $A^!_{4}$ would have been $22$. Since $\dim A^!_4=21$, there is exactly one degree $4$ element $p$ of the Gr\"obner basis of the ideal of relations of $A^!$. The leading monomial $\overline{p}$ of $p$ can have either the shape $w_1w_2w_3x$ or the shape $w_1w_2w_3w_4$, where $w_j\in\{y,z\}$. Again, first, assume that $\overline{p}=w_1w_2w_3w_4$. Then $A^!_5$ is spanned by $v_1v_2v_3v_4x$ with $v_j\in\{y,z\}$ and $v_3v_4\neq u_1u_2$ and by $v_1v_2v_3v_4v_5$ with $v_j\in\{y,z\}$ and $v_1v_2v_3v_4\neq w_1w_2w_3w_4$, $v_2v_3v_4v_5\neq w_1w_2w_3w_4$. It easily follows that $\dim A^!_5\leq 41$. Since by the above display $\dim A^!_5=42$, we have arrived to a contradiction, which proves that $\overline{p}$ can not be of the shape $w_1w_2w_3w_4$.

Hence $\overline{p}=w_1w_2w_3x$ with $w_j\in\{y,z\}$ and $w_2w_3\neq u_1u_2$. In this case, $A^!_6$ is spanned by 64 elements $v_1v_2v_3v_4v_5v_6$ with $v_j\in\{y,z\}$ and by 20 elements $v_1v_2v_3v_4v_5x$ with $v_j\in\{y,z\}$, $v_4v_5\neq u_1u_2$, $v_3v_4v_5\neq w_1w_2w_3$. Hence $\dim A^!_6\leq 84$. Since by the above display $\dim A^!_6=85$, we have arrived to a contradiction. Thus $H_4$ is not the Hilbert series of a Koszul algebra.
\end{proof}

\begin{proof}[Proof of Theorem~$\ref{main002}$] By Proposition~\ref{kokoko}, for $j\in \{2,5,6,7,8,9,10,11\}$, there is a Koszul algebra $A$ satisfying $H_A=H_j$. By Proposition~\ref{noko}, every quadratic algebra $A$ satisfying $H_A=H_j$ with $j\in\{1,3,4\}$ is non-Koszul.
\end{proof}

\subsection{Some remarks}

{\bf 1.} \ The condition of $\K$ being infinite in Lemma~\ref{split} can be relaxed to $\K$ having sufficiently many elements. More precisely, examining closely the idea behind the proof, one gets that Lemma~\ref{split} holds if the condition of $\K$ being infinite is relaxed to $|\K|\geq 4$. On the other hand, the following example shows that the conclusion of Lemma~\ref{split} fails if $|\K|=2$.

\begin{example}\label{Z2} Let $x$, $y$, $z$ be a basis of a $3$-dimensional vector space $V$ over the $2$-element field $\K=\Z_2$. Let also $R\subset V\otimes V$ be the linear span of $x\otimes x$, $y\otimes y$, $z\otimes z$, $y\otimes z+z\otimes y$, $x\otimes y+z\otimes x+z\otimes y$ and $x\otimes z+y\otimes x+z\otimes y$. Then $R$ is a $6$-dimensional subspace of $V\otimes V$ for which each of the conditions {\rm (P1--P3)} of Lemma~$\ref{split}$ fails.
\end{example}

We leave the verification to the reader. It can be done by brute force since $V^+=V\setminus\{0\}$ has just $7$ elements. For example, to show that (P1) fails, one has to find for every $u\in V^+$, $\,\,v,w\in V^+$ such that $u\otimes v,w\otimes u\in R$. Note that extending $\K$ to a $4$-element field forces $R$ from the above example to satisfy (P1). We do not know whether the conclusion of Lemma~\ref{split} holds if $|\K|=3$.

\medskip

\noindent {\bf 2.} \ By Proposition~\ref{autopbw}, $A$ is automatically Koszul if $H_A=H_j$ for $j\in \{8,9,10,11\}$.
 Note that if $A$ is a quadratic algebra satisfying $H_A=H_j$ with $j\in\{2,5,6,7\}$, this does not necessarily mean that $A$ is Koszul. We construct the following examples to illustrate this. In these examples we assume $|\K|>2$ and $\alpha\in\K$ is an arbitrary element different from $0$ or $1$. Table~2 is completed by computing the Gr\"obner bases of ideals of relations of algebras $B_j$.

\begin{table}[ht]
\caption{Algebras $B_j$ for $j\in\{2,5,6,7\}$\huhu}
\centering
\scalebox{0.95}{
\begin{tabular}{|l|l|l|l|l|}
\hline
\smash{\lower6pt\hbox{$j$}}&\smash{\lower6pt\hbox{relations of $B_j$}}&other elements of&\smash{\lower6pt\hbox{$H_{B_j}$}}&\smash{\lower6pt\hbox{relations of $B_j^!$}}\\
&&the Gr\"obner basis&&\\
\hline
2&$xx{+}yz$,\,$xz$,\,$yx$,\,$yy{+}zx$,\,$zy$,\,$zz$&$yzx{-}zxy$,\,$xyz{-}zxy$&$H_2$&$xx{-}yz$,\,$xy$,\,$yy{-}zx$\\
\hline
\smash{\lower6pt\hbox{5}}&$xx{-}zx$,\,$xy{-}zx$,\,$yx{-}zx$,\,$yy{-}zx$,&\smash{\lower6pt\hbox{$zzx$,\,$zzzz$}}&\smash{\lower6pt\hbox{$H_5$}}&$zy$,\,$xz{+}yz{+}(1{-}\alpha)zz$,\\
&$xz{+}\alpha zx{-}\frac{1}{1{-}\alpha}zz$,\,$yz{+}\alpha zx{-}\frac{1}{1{-}\alpha}zz$&&& $xx{+}xy{+}yx{+}yy{+}zx{+}\alpha(1{-}\alpha)zz$\\
\hline
6&$xx{-}\alpha zx$,\,$xy{-}zy$,\,$yx$,\,$xz{-}\alpha zx$,\,$yz$,\,$yy$&$zzx$,\,$zzzy$&$H_6$&$xx+xz+\frac1{\alpha}zx$,\,$xy+zy$,\,$zz$\\
\hline
7&$xx{-}zx$,\,$xy$,\,$yx$,\,$xz{-}\alpha zx$,\,$yz$,\,$yy$&$zzx$&$H_7$&$xx+\alpha xz+zx$,\,$zy$,\,$zz$\\
\hline
\end{tabular}
}
\label{table2}
\end{table}

Computing the Gr\"obner bases of ideals of relations of algebras $B^!_j$ up to degree $4$, we easily obtain the data presented in Table~3.

\begin{table}[ht]
\caption{The series $(H_{B_j}(-t))^{-1}$ and $H_{B_j^!}(t)$ up to degree $4$\huhu}
\centering
\begin{tabular}{|l|l|l|}
\hline
$j$&$(H_{B_j}(-t))^{-1}$ up to $t^4$&$H_{B_j^!}(t)$ up to $t^4$\\
\hline
2&$1+3t+6t^2+10t^3+15t^4+{\cdots}$&$1+3t+6t^2+10t^3+17t^4+{\cdots}$\\
\hline
5&$1+3t+6t^2+11t^3+20t^4+{\cdots}$&$1+3t+6t^2+11t^3+21t^4+{\cdots}$\\
\hline
6&$1+3t+6t^2+11t^3+20t^4+{\cdots}$&$1+3t+6t^2+11t^3+21t^4+{\cdots}$\\
\hline
7&$1+3t+6t^2+11t^3+19t^4+{\cdots}$&$1+3t+6t^2+11t^3+20t^4+{\cdots}$\\
\hline
\end{tabular}
\label{table3}
\end{table}

Table~3 ensures that each $B_j$ fails to satisfy $H_{B_j}(-t)H_{B_j^!}(t)=1$ and therefore each $B_j$ is non-Koszul. Thus the following statement holds true.

\begin{proposition}\label{SSS} Assuming $|\K|>2$, for each $j\in\{2,5,6,7\}$,
there is a non-Koszul quadratic algebra $B$ satisfying $H_{B}=H_j$.
\end{proposition}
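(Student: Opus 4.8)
The plan is to exploit the numerical necessary condition for Koszulity recorded in~(\ref{stm2}): if $A$ is Koszul, then $H_A(-t)H_{A^!}(t)=1$, equivalently $H_{A^!}(t)=\bigl(H_A(-t)\bigr)^{-1}$ as formal power series. Consequently, to certify that a given quadratic algebra is \emph{not} Koszul it suffices to locate a single degree in which the honestly computed dual series $H_{A^!}(t)$ disagrees with the inverse series $\bigl(H_A(-t)\bigr)^{-1}$. The range $j\in\{2,5,6,7\}$ is exactly the band where such a witness is needed: by Proposition~\ref{autopbw} every quadratic algebra with $H_A=H_j$ and $j\in\{8,9,10,11\}$ is automatically Koszul, while by Proposition~\ref{noko} every quadratic algebra with $H_A=H_j$ and $j\in\{1,3,4\}$ is automatically non-Koszul. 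For $j\in\{2,5,6,7\}$ neither conclusion is forced, so both a Koszul example (the $A_j$ of Table~\ref{table1}) and a non-Koszul example must be produced separately; the latter is the content of the proposition.

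First I would manufacture, for each $j\in\{2,5,6,7\}$, a genuine one-parameter deformation $B_j$ of the PBW/monomial model $A_j$, introducing a scalar $\alpha\in\K\setminus\{0,1\}$ (this is where $|\K|>2$ is used). The defining relations of $B_j$ are chosen so that their leading monomials under the fixed left-to-right degree-lexicographic order coincide with those of $A_j$; the deformation is arranged \emph{not} to be a monomial or PBW presentation, so that Koszulity is no longer guaranteed. Next I would verify that $H_{B_j}=H_j$. This reduces to computing the reduced Gr\"obner basis of the relation ideal: the overlaps of the six quadratic relations produce the finitely many higher-degree elements listed in the third column of Table~\ref{table2} and then terminate, and their leading monomials agree with those of $A_j$. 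Hence the normal words of $B_j$ are exactly those of $A_j$, so the Hilbert series is unchanged and equals $H_j$.

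Finally I would compute the Koszul dual relations of each $B_j$ (the last column of Table~\ref{table2}), run the Buchberger procedure on the dual ideal through degree~$4$, and count the normal words to obtain $H_{B_j^!}(t)$ up to $t^4$; independently I would invert the polynomial $H_j(-t)$ as a power series through $t^4$. The outcome is summarized in Table~\ref{table3}: in every one of the four cases the coefficient of $t^4$ in $H_{B_j^!}(t)$ exceeds the coefficient of $t^4$ in $\bigl(H_{B_j}(-t)\bigr)^{-1}$ (by $2,1,1,1$ for $j=2,5,6,7$ respectively). Thus $H_{B_j}(-t)H_{B_j^!}(t)\neq 1$, and by the contrapositive of~(\ref{stm2}) each $B_j$ is non-Koszul, which is the claim. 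The main obstacle is not the final comparison, which is a finite arithmetic check, but the construction in the middle step: one must exhibit deformations that simultaneously preserve the Hilbert series $H_j$ and break Koszulity. The delicate point is the Gr\"obner-basis verification that the $\alpha$-deformed relations still close up into the leading-term pattern of $A_j$ --- so that no extra leading monomial appears to shrink the set of normal words --- while the twist by $\alpha$ is nontrivial enough to destroy the exactness underlying Koszulity.
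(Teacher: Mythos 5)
Your proposal is correct and follows essentially the same route as the paper: exhibit the deformed algebras $B_j$ of Table~\ref{table2}, verify $H_{B_j}=H_j$ by a Gr\"obner basis computation, and then refute the numerical identity $H_{B_j}(-t)H_{B_j^!}(t)=1$ by comparing the $t^4$ coefficients as in Table~\ref{table3}. One small caveat: the leading monomials of the relations of $B_j$ do \emph{not} literally coincide with those of $A_j$ (e.g.\ $B_2$ has leading monomial $xz$ where $A_2$ has $zx$, and extra cubic Gr\"obner elements appear), so the equality $H_{B_j}=H_j$ really does rest on the full Gr\"obner computation you describe rather than on a normal-word-for-normal-word match with $A_j$.
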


\noindent {\bf 3.} \ By the duality formula (\ref{stm2}), Theorem~\ref{main002} implies the list of all Hilbert series of Koszul algebras $A$ satisfying $\dim A_1=3$ and $\dim A_2=6$. They are the series $1/H(-t)$ for $H$ from the list specified in Theorem~\ref{main002}. Thus we have the following corollary.

\begin{corollary}\label{36}For Koszul algebras $A$ satisfying $\dim A_1=3$ and $\dim A_2=6$, the complete list of Hilbert series consists of $\frac{1}{H_j(-t)}$ with $j\in\{2,5,6,7,8,9,10,11\}$.
\end{corollary}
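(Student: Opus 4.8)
The plan is to deduce the corollary directly from Koszul duality together with Theorem~\ref{main002}, which already classifies the Hilbert series in the dual range of dimensions, so that essentially no new work beyond bookkeeping is required.

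First I would record how the duality operation $A\mapsto A^!$ acts on the two relevant dimensions. Since every Koszul algebra is quadratic by (\ref{stm2}), any Koszul $A$ with $\dim A_1=3$ has the form $A=A(V,R)$ with $\dim V=3$ and $R\subset V\otimes V$. Then $\dim A_2=9-\dim R$, while $A^!=A(V,R^\perp)$ satisfies $\dim A^!_2=9-\dim R^\perp=\dim R$ and $\dim A^!_1=\dim V=3$. Hence the condition $\dim A_1=3$, $\dim A_2=6$ is equivalent to $\dim R=3$, which in turn is equivalent to $\dim A^!_1=\dim A^!_2=3$. Using that $A$ is Koszul if and only if $A^!$ is (again by (\ref{stm2})) and that $(A^!)^!\cong A$, I would conclude that $A\mapsto A^!$ is an involutive bijection between the class of Koszul algebras with $\dim A_1=3$, $\dim A_2=6$ and the class of Koszul algebras $B$ with $\dim B_1=\dim B_2=3$.

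Next I would transport the Hilbert series across this bijection. By (\ref{stm2}), Koszulity of $A$ gives $H_A(-t)H_{A^!}(t)=1$, hence $H_A(t)=1/H_{A^!}(-t)$. By Theorem~\ref{main002}, the Hilbert series of a Koszul algebra $B$ with $\dim B_1=\dim B_2=3$ is precisely one of $H_j$ with $j\in\{2,5,6,7,8,9,10,11\}$, and each such $H_j$ is realized by some Koszul $B$. Applying the bijection in both directions then yields the claim: every Koszul $A$ with $\dim A_1=3$, $\dim A_2=6$ has $H_A=1/H_{A^!}(-t)$ with $H_{A^!}\in\{H_j:j\in\{2,5,6,7,8,9,10,11\}\}$, and conversely each value $1/H_j(-t)$ is attained by $A=B^!$ for a Koszul $B$ with $H_B=H_j$.

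I do not expect a genuine obstacle here, as all the substantive content is already contained in Theorem~\ref{main002}. The only points deserving a moment of care are the dimension count $\dim A^!_2=\dim R$, which is what makes the two dimension conditions dual to one another, and the availability of the realizing examples on the dual side. Both are immediate; in particular, positivity and integrality of the coefficients of $1/H_j(-t)$ need not be checked by hand, since each such series is the honest Hilbert series of the Koszul algebra $B^!$.
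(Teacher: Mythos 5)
Your proof is correct and follows essentially the same route as the paper: the corollary is obtained from Theorem~\ref{main002} via the duality $A\mapsto A^!$, the equivalence of Koszulity for $A$ and $A^!$, and the formula $H_A(-t)H_{A^!}(t)=1$ from (\ref{stm2}). Your explicit verification that $\dim A_1=3$, $\dim A_2=6$ corresponds under duality to $\dim A^!_1=\dim A^!_2=3$ (via $\dim R=3$, $\dim R^\perp=6$) is a detail the paper leaves implicit, but it is exactly the intended argument.
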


\medskip

\noindent {\bf 4.} \ A quadratic algebra $A$ satisfying $H_A(t)H_{A^!}(-t)=1$ is called {\it numerically Koszul}. There are examples of  numerically Koszul quadratic algebras which are not Koszul, see \cite{popo}. While proving Proposition~\ref{noko}, we have actually shown that $H_j$ with $j\in\{1,3,4\}$ can not be the Hilbert series of a numerically Koszul quadratic algebra. We do not know an answer to the following question.

\begin{question}\label{q1} Let $A$ be a numerically Koszul quadratic algebra satisfying $\dim A_1=\dim A_2=3$. Is it true that $A$ is Koszul$?$
\end{question}

We are especially interested in the following particular case.

\begin{question}\label{q2}\footnote{In the meantime we have acquired an affirmative solution of this question} Let $A$ be a quadratic algebra satisfying $H_A(t)=(1-t)^{-3}$ and $H_{A^!}(t)=(1+t)^3$. Is it true that $A$ is Koszul$?$
\end{question}

Note that $(1-t)^{-3}$ is the Hilbert series of $\K[x,y,z]$. The following example shows that for a quadratic algebra $A$, the equality $H_A=(1-t)^{-3}$ alone does not guarantee numeric Koszulity.

\begin{example}\label{nonuko} Let $A$ be the quadratic algebra given by the generators $x$, $y$, $z$ and the relations $xx$, $xz+yy+zx$, $xy+yx+zz$. Then $H_A=(1-t)^{-3}$, while $H_{A^!}=H_3$. In particular, $A$ is not numerically Koszul and therefore is non-Koszul.
\end{example}

\begin{proof} A direct computation shows that the defining relations of $A$ together with $yyz-zyy$ and $yzz-zyy$ form a Gr\"obner basis for the ideal of relations of $A$. Now one easily sees that the normal words for $A$ are $z^k(yz)^ly^mx^\epsilon$ with $k,l,m\in\Z_+$ and $\epsilon\in\{0,1\}$ and that the number of normal words of degree $n$ is $\frac{(n+1)(n+2)}2$. Hence $H_A(t)=(1-t)^{-3}$. The dual $A^!$ is given by the relations $yz$, $zy$, $yy-zx$, $xz-zx$, $xy-zz$ and $yx-zz$, which together with $zxx$, $zzx$ and $zzz$ form a Gr\"obner basis for the ideal of relations of $A^!$. The only normal word of degree $n\geq 3$ is $x^n$, which gives $H_{A^!}(t)=H_3$.
\end{proof}

\medskip

\noindent {\bf 5.} \ The following question remains open.

\begin{question}\label{q3} Which series feature as the Hilbert series of quadratic algebras satisfying $\dim A_1=3$ and $\dim A_2=4?$ Which of these occur for Koszul $A?$
\end{question}

The answer to the above question would complete the list of Hilbert series of Koszul algebras $A$ satisfying $\dim A_1=3$.
In \cite{popo}, it is mentioned that it is unknown whether there is a Koszul algebra $A$ satisfying $\dim A_1=3$, $\dim A_2=4$ and $\dim A_3=3$.
It is important to answer also because if
 such an algebra  exists, it would provide a counterexample to the conjecture on rationality of the Hilbert series of Koszul modules over Koszul algebras.
 However if such an algebra does not exist, nothing can be derived about rationality.

\medskip

{\bf Acknowledgements:} \

We are grateful to IHES and MPIM for hospitality, support, and excellent research atmosphere.
We would like to thank anonymous referees for careful reading and useful comments.
This work is funded by the ERC grant 320974, and partially supported by the project PUT9038.


\small\rm

\vspace{15mm}

\scshape

\noindent   Natalia Iyudu

\noindent School of Mathematics

\noindent  The University of Edinburgh

\noindent James Clerk Maxwell Building

\noindent The King's Buildings

\noindent Peter Guthrie Tait Road

\noindent Edinburgh

\noindent Scotland EH9 3FD

\noindent E-mail address: \qquad {\tt niyudu@staffmail.ed.ac.uk}\ \ \

\vskip 11mm

\vskip1truecm

\scshape

\noindent  Stanislav Shkarin

\noindent Queens's University Belfast

\noindent Department of Pure Mathematics

\noindent University road, Belfast, BT7 1NN, UK

\noindent E-mail addresses: \qquad {\tt s.shkarin@qub.ac.uk}\ \ \



\begin{thebibliography}{99}

\itemsep=-2pt

\bibitem{ani1}D.~Anick, \it Generic algebras and CW complexes, \rm Algebraic
topology and algebraic $K$-theory (Princeton, N.J., 1983), 247--321,
Ann. of Math. Stud. \bf113\rm, Princeton Univ. Press, Princeton, NJ,
1987

\bibitem{ani2}D.~Anick, \it Noncommutative graded algebras and their Hilbert
series, \rm  J. Algebra \bf78\rm\ (1982), 120--140





\bibitem{AS}M.~Artin and W.~Shelter, \it Graded algebras of global dimension $3$, \rm
Adv. in Math. \bf66\rm\ (1987), 171-216.

\bibitem{ATV1}Artin, M.; Tate, J.; Van den Bergh, M. \it Modules over regular algebras of dimension 3 \rm Invent.Math. {\bf 106} (1991), 335-388.


\bibitem{ATV2}Artin, M.; Tate, J.; Van den Bergh, M. \it Some algebras associated to automorphisms of elliptic curves. \rm The Grothendieck Festschrift, Vol. I, 33 –- 85, Progr. Math., {\bf 86}, Birkh\"auser Boston, Boston, MA, 1990.


\bibitem{BW}Bocklandt, Raf; Schedler, Travis; Wemyss, Michael \it Superpotentials and higher order derivations. \rm J. Pure Appl. Algebra {\bf 214} (2010), no. 9, 1501–-1522.

    \bibitem{cana}P.~Cameron and N.~Iyudu, \it Graphs of relations and Hilbert
series, \rm J. Symbolic Comput. \bf42\rm\ (2007), 1066--1078


\bibitem{CDV}  Alain Connes, Michel Dubois-Violette, \it Non commutative finite dimensional manifolds II. Moduli space and structure of non commutative 3-spheres, \rm   Communications in Mathematical Physics \bf281\rm\ (2008), 23-127



\bibitem{dr}V.~Drinfeld, \it On quadratic quasi-commutational relations in
quasi-classical limit, \rm Selecta Math. Sovietica {\bf 11} (1992), 317--326.

\bibitem{DV1}Dubois-Violette, Michel \it Graded algebras and multilinear forms. \rm C. R. Math. Acad. Sci. Paris \bf341\rm\ 2005, no. 12, 719-–724.

\bibitem{DV2}Dubois-Violette, Michel \it Multilinear forms and graded algebras. \rm J. Algebra \bf{317}\rm 2007, no. 1, 198–-225.


\bibitem{Er}Ershov, M., \it Golod - Shafarevich groups: A survey. \rm Int. J. Algebra Comput. {\bf 22}(2012), N5, 1–-68


\bibitem{gosh}E.~Golod and I.~Shafarevich, \it On the class field tower \rm
(Russian), Izv. Akad. Nauk SSSR Ser. Mat. \bf28\rm\ (1964), 261--272.


\bibitem{na}N.~Iyudu and S.~Shkarin, \it The Golod-Shafarevich inequality
for Hilbert series of quadratic algebras and the Anick conjecture,
\rm Proc. Roy. Soc. Edinburgh \bf A141\rm\ (2011), 609--629

\bibitem{ns}N.~Iyudu and S.~Shkarin, \it Finite dimensional semigroup quadratic
algebras with minimal number of relations, \rm Monatsh. Math. \bf168\rm\ (2012), 239--252

\bibitem{ns1}N.~Iyudu and S.~Shkarin, \it Asymptotically optimal $k$-step
nilpotency of quadratic algebras and the Fibonacci numbers, \rm Combinatorica [to appear]


\bibitem{ns5}N.~Iyudu and S.~Shkarin, \it Optimal 5-step nilpotent quadratic algebras, \rm J. Algebra\ \bf412\rm\
(2014), 1-14

\bibitem{MK}Kontsevich, Maxim, \it Formal (non)commutative symplectic geometry. \rm The Gel'fand Mathematical Seminars, 1990–-1992, 173-–187, Birkhäuser Boston, Boston, MA, 1993.
\bibitem{MR}Kontsevich, Maxim, Rosenberg, Alexander \it Noncommutative smooth spaces. \rm arXivmath/9812158

\bibitem{Ag1} T. H. Lenagan and Agata Smoktunowicz, \it An infinite dimensional affine nil algebra with finite Gelfand-Kirillov dimension, \rm J. Amer. Math. Soc. {\bf 20} (2007), 989--1001.

\bibitem{M}Manin Yu.I. \it Some remarks on Koszul algebras and quantum groups \rm
Annales de l'institut Fourier \bf37\rm\ (1987) n.4, p.191-205

\bibitem{odf}Odesskii, A. V.; Feigin, B. L. \it Sklyanin's elliptic algebras. (Russian) \rm Funktsional. Anal. i Prilozhen. {\bf 23} (1989), no. 3, 45--54; translation in Funct. Anal. Appl. {\bf 23} (1990), no. 3, 207-–214


\bibitem{popo}A.~Polishchuk and L.~Positselski, \it Quadratic
algebras, \rm University Lecture Series \bf37\rm\ American
Mathematical Society, Providence, RI, 2005


 \bibitem{Pri}Priddy, Stewart \it Koszul resolutions and the Steenrod algebra. Bull. Amer. Math. Soc. \bf76\rm\ (1970), no. 4, 834--839

\bibitem{sue}S.Sierra, D. Rogalski, J. T. Stafford, \it Noncommutative blowups of elliptic algebras \rm
Algebras and Representation Theory,  \bf18\rm\ (2015), 491--529.


\bibitem{skl}Sklyanin, E. K. \it Some algebraic structures connected with the Yang-Baxter equation. Representations of a quantum algebra. (Russian)\rm Funktsional. Anal. i Prilozhen, \bf 17\rm\ (1983), no. 4, 34–-48.





\bibitem{Zelm}Zelmanov, E. \it Some open problems in the theory of infinite dimensional algebras, \rm J. Korean Math. Soc. {\bf 44}(2007), N5, 1185-–1195.














\end{thebibliography}
\end{document}